\theoremstyle{plain}% Theorem-like structures provided by amsthm.sty
\newtheorem{theorem}{Theorem}[section]
\newtheorem{lemma}[theorem]{Lemma}
\newtheorem{corollary}[theorem]{Corollary}
\newtheorem{proposition}[theorem]{Proposition}
\renewcommand{\color}[1]{}
\newcommand{\wh}{\widehat}
\newcommand{\ol}{\overline}
\newcommand{\R}{\mathbb{R}}
\newcommand{\eps}{\varepsilon}
\newcommand{\sign}{\,{\rm{sign}}\,}
\newcommand{\diag}{\,{\rm{diag}}\,}
\newcommand{\inv}{{\rm{inv}}}
\newcommand{\lw}{{\rm{lower}}}
\newcommand{\up}{{\rm{upper}}}
\newcommand{\Arg}{{\rm{Arg}}\,}
\begin{document}

\title{New versions of Newton method: step-size choice, convergence domain and under-determined equations}

\author{
{Boris Polyak 
and Andrey Tremba}\\
{Institute for Control Sciences, Moscow, Russia}
}

\maketitle

% Abstract 200 words
\begin{abstract}Newton method is one of the most powerful methods for finding solutions of nonlinear equations and for proving their existence. In its ``pure'' form it has fast convergence near the solution, but small convergence domain.
On the other hand damped Newton method has slower convergence rate, but weaker conditions on the initial point.
We provide new versions of Newton-like algorithms, resulting in combinations of Newton and damped Newton method with special step-size choice, and estimate its convergence domain. Under some assumptions the convergence is global. Explicit complexity results are also addressed.
The adaptive version of the algorithm (with no a priori constants knowledge) is presented. 
The method is applicable for under-determined equations (with $m<n$, $m$ being the number of equations and $n$ being the number of variables).
The results are specified for systems of quadratic equations, for composite mappings and for one-dimensional equations and inequalities. 
	
% 5-8 keywords
%\keywords{
Keywords:
Nonlinear equations; Newton method; under-determined equations; global convergence; adaptive algorithms; metric regularity
%}

% \PACS{PACS code1 \and PACS code2 \and more}
% \subclass{MSC code1 \and MSC code2 \and more}
	
	% up to 5 2010 MSC codes
\end{abstract}

\section{Introduction}
\label{intro}
Consider nonlinear equation
\begin{equation} \label{eq:main-nonlinear-P} 
P(x) = 0, 
\end{equation}
written via the vector function $P : \R^n\rightarrow \R^m$. 
There exists the huge bunch of literature on solvability of such equations and numerical methods for their solution, see e.g. the classical monographs \cite{OR,DS}. One of the most powerful methods is \emph{Newton method}, going back to such giants as Newton, Cauchy, Fourier. 
The general form of the method is due to Kantorovich \cite{Kant82}; on history and references see \cite{Kelley,Yam,Polyak2006,Deuflhard-2004}. The basic version of Newton method for \eqref{eq:main-nonlinear-P} is applicable when $P(x)$ is differentiable and $P'(x)$ is invertible (this implies $m=n$):
\begin{equation} \label{eq:classic-newton}
x^{k+1} = x^k - P'(x^k)^{-1} P(x^k).
\end{equation}
The method converges under some natural conditions, moreover it can be used for obtaining existence theorems for the solution (see references cited above). Unfortunately Newton method converges only locally: it requires a good initial approximation $x^0$ (so called ``hot start''). Convergence conditions can be relaxed for \emph{damped Newton method}
\begin{equation}
\label{damped} \nonumber
x^{k+1} = x^k - \alpha P'(x^k)^{-1} P(x^k)
\end{equation}
with $0<\alpha < 1$. 

The advanced, generalized writing of Newton method is
\begin{equation}
\label{eq:newton-with-alpha_k-for-P} 
\begin{array}{l}
x^{k+1} = x^k - \alpha_k z^k, \;\; k = 0, 1, \ldots \\
z^k \in \Arg \min_z \{||z||: P'(x^k)z = P(x^k)\}.
\end{array}
\end{equation}
This variant relies on the solvability of the linear equation only, and it also admits non-constant step-size. It is applicable to under-determined systems of equations ($m < n$) and to non-linear equations in Banach space. The latter are outside of the scope of this paper but its analysis is essentially the same. 

If $m=n$ and $P'(x^k)^{-1}$ exists, the method \eqref{eq:newton-with-alpha_k-for-P} coincides with classical Newton method for $\alpha_k = 1$ and damped Newton method for $\alpha_k = \alpha < 1$. 
Starting at some initial point $x^0$ the latter method converges to $x^*$ under some additional constraints on residual $||P(x^0)||$ and function-related constants $L$, $\rho$, $\mu$, $\alpha$ (see Theorems below for rigorous conditions). 

In explicit form Newton method for $m\neq n$ has been written by Ben-Israel \cite{BI}:
\begin{equation} \label{eq:newton-with-pseudoinverse}
x^{k+1} = x^k - P'(x^k)^\dagger P(x^k), 
\end{equation} 
where
$A^\dagger$ stands for Moore-Penrose pseudoinverse of $A$. However the results in
\cite{BI} are mostly oriented on over-determined systems, and the assumptions of the theorems in \cite{BI} are hard to check. 
Other publications on under-determined equations include \cite{Husler,Nesterov,Polyak-1964,Walker,Nashed-Chen}. Moreover there exist numerous literature on more general settings: equalities plus inequalities \cite{Pshen,Rob}, optimization problems \cite{Burke,Hager} with more general algorithms, which can be applied to solving of equations as particular case.
In next Section~\ref{sec:under-determined} we discuss the under-determined finite-dimensional case in more details.

There is a very similar problem statement to \eqref{eq:main-nonlinear-P}, made in terms of the equation with the variable right-hand side
\begin{equation} \label{eq:main-nonlinear-y} 
g(x) = y,
\end{equation} 
with $g: \R^n \rightarrow \R^m$ having a known solution $\ol{x} : g(\ol{x}) = 0$ and the variable $y$ as a parameter.

The question is: for which right-hand side part $y$ the equation is still feasible and what are the solutions?
This problem arises in finding image of a mapping $\{g(x) : x \in \R^n\}$, checking robustness/sensitivity of a solution or exploration problem of the image, etc. 
In general, this problem is hardly solvable, but we can provide \emph{local} sufficient conditions of feasibility, imposed on $y$. 

Trivially, equation \eqref{eq:main-nonlinear-y} can be written in the form \eqref{eq:main-nonlinear-P} with $P_y(x) = g(x) - y = 0$ and $\|P_y(\ol{x})\| = \|y\|$.
Thus conditions on feasibility of the equation $P_y(x) = 0$ are exactly conditions on feasibility \eqref{eq:main-nonlinear-y} with respect to right-hand side $y$, and vice-versa. 

Let us explain the connection between \eqref{eq:main-nonlinear-P} and \eqref{eq:main-nonlinear-y} deeper. There are few approaches to treating feasibility of an equation. One of them is to prove existence of the solution by providing \emph{semi-local} existence theorems. These involve conditions in some point and/or around it, and prove that if such conditions hold, then a solution exists.
It is not necessary to provide tools for finding this solution.

Another way is to explore a constructive, algorithmic way of solving the equation, starting at a point $x^0$, resulting in a sequence $\{x^k\}$, and prove convergence to a solution $x^k \rightarrow x^*$ (e.g. fixed-point theorems). The convergence conditions are typically tied to the sequence, including starting point $x^0$. The conditions do not necessarily coincide with the conditions of semi-local existence theorems. Moreover, the conditions ensuring \emph{faster} convergence of the algorithm are typically more strict, than the conditions of the semi-local existence theorems.

In Newton method theory these approaches are closely connected, and semi-local theorems are often proved via convergence of variants of Newton method.
We also show it below in Theorem~\ref{thm:existence-solution-for-g}. 
This relation becomes very clear in comparison of equations \eqref{eq:main-nonlinear-P} and \eqref{eq:main-nonlinear-y}.
Naturally feasibility of \eqref{eq:main-nonlinear-P} being solved by a Newton-like method (i.e. \emph{convergence} of the Newton method started at $x^0$!) is stated in terms of norm of initial residual, say $\|P(x^0)\| \leq s$. In terms of \eqref{eq:main-nonlinear-y} the very same Newton-like method, being applied to the constructed $P_y(\cdot)$ and being started at the same point $\ol{x}=x^0$, converges for \emph{any} fixed $y : \|y\| \leq s$. This analysis claims \emph{feasibility} of \eqref{eq:main-nonlinear-y} for all such $y$.
Therefore finding the \emph{largest} set of possible $y$ is essentially the same as the problem of finding the \emph{broadest} residual range $P(x^0)$. 
Without loss of generality, {\color{red}through} the paper we assume $\ol{x} = 0$, and switch between
problems \eqref{eq:main-nonlinear-P} and \eqref{eq:main-nonlinear-y} as equivalent ones. The difference is clear from context.

We also examine some special cases of the nonlinear equations. One of them is the quadratic case, when all components of $g$ are quadratic functions:
\begin{equation} \label{eq:quadratic-componentwise} 
g_{i}(x) = \frac{1}{2}(A_i x, x) + (b_i,x),\;\; A_i=A_i^T, \;\; b_i\in \R^n. 
\end{equation}
In this case we try to specify above results and design the algorithms to check feasibility of a vector $y\in R^m$.

The first goal of the present paper is to give explicit expressions of the method
\eqref{eq:newton-with-alpha_k-for-P} for various norms and to provide simple, easily checkable conditions
for convergence of the method. 
This also provides existence theorems: what is \emph{a feasible set}
$Y$ such that $y\in Y$ implies solvability of \eqref{eq:main-nonlinear-y}.

The second goal is to develop constructive algorithms for choosing step-sizes $\alpha_k $ to achieve fast
and \emph{as global as possible} convergence. We suggest different strategies for constructing
algorithms, study their properties and provide explicit convergence conditions for the method and demonstrate its potentially global convergence.

The main contributions of the paper are threefold.

\begin{enumerate}
\item We propose the novel procedure for adjusting step-size $\alpha_k$. The strategy guarantees wide range of convergence (in some cases the algorithm converges globally) and fast rate of convergence (local quadratic convergence, typical for pure Newton method). Moreover explicit formula for method's complexity is provided.

\item The choice of norms in the algorithm can be different, thus we arrive to different versions of the algorithm. For instance, Euclidean norms imply explicit form of desired direction $z^k$ (the same as in \eqref{eq:newton-with-pseudoinverse}) while $\ell_1$ norm provides sparse approximations etc.

\item We consider numerous applications, including under-determined cases, quadratic equations, one equation with $n$ variables.
\end{enumerate}

Few words on comparison with known results. 
In the paper \cite{Polyak-1964} results on solvability of nonlinear equations in Banach spaces and on application of Newton-like methods have been formulated in semi-local form. One of the results from \cite{Polyak-1964} adopted to our notation and finite-dimensional case claims that if $P'(x)$ exists and is Lipschitz on a ball $B$ centered in $x^0$ and estimate $||P'(x)^T h||_{\color{red}*} \geq \mu ||h||_{\color{red}*},\; \mu > 0,\; \forall h$ holds on $B$, then equation \eqref{eq:main-nonlinear-P} has a solution $x^*$ provided $||P(x^0)|| < \frac{\rho}{\mu}$.
Another result deals with convergence of Newton method; however the method is not provided in explicit form. The condition on the derivative has extension to non-differentiable functions and multi-valued mapping an is known as \textit{metric regularity}, used for proving existence theorems in different cases.

The paper, which contains the closest results to ours, is \cite{Nesterov}. Nesterov addresses the same problem \eqref{eq:main-nonlinear-P} and his method (in our notation) has the form

\begin{equation} \nonumber \label{eq:nesterov} 
\begin{array}{l}
x^{k+1} = x^k - z^k, \;\; k = 0, 1, \ldots \\
\displaystyle
z^k = \arg \min_z \{||P(x^k) - P'(x^k)z|| + M ||z||^2\}, 
\end{array} 
\end{equation}
where $M$ is the scalar parameter to be adjusted at each iteration. Nesterov's assumptions are close to ours and his results on solvability of equations and on convergence of the method are similar. The main difference is the method itself; it is not clear how to solve the auxiliary optimization problem in Nesterov's method, while finding $z^k$ in our method can be implemented in explicit form. Other papers on under-determined equations mentioned above either do not specify the technique for solving the linearized auxiliary equation, or restrict analysis with Euclidean norm and/or pure Newton step-size $\alpha_k=1$, see
e.g. \cite{Polyak-1964,PT,Walker,L_BI}.

The rest of the paper is organized as follows. Next section is introductory to the case of under-determined systems. In Section~\ref{sec:preliminaries} we remind few notions and results.
Next we prove simple solvability conditions for \eqref{eq:main-nonlinear-P}. 
In main Section~\ref{sec:main-algorithms} we
propose few variants of general Newton algorithm \eqref{eq:newton-with-alpha_k-for-P}, including adaptive ones and estimate their convergence rate. Some particular
cases (scalar equations and inequalities, quadratic equations, problems with special structure) are treated
in Section~\ref{sec:special-cases}. Results of numerical simulation are exhibited in Section~\ref{sec:numerical}. Conclusion part finalizes the paper (Section~\ref{sec:conclusion}).

\section{Under-determined systems of equations}
\label{sec:under-determined}
Under-determined equations attracted our attention by specific norm-dependency property. In case of $m < n$ norms in $\R^n$ (in the optimization sub-problem) and $\R^m$ (for residual) can be chosen arbitrarily, and they imply principally different forms and results of Newton method \eqref{eq:newton-with-alpha_k-for-P}. Conditions on solvability and convergence look similar, but the results differ strongly.

Historically the case of under-determined equations ($m<n$) attracted much less attention than equations with the same number of equations and variables. 
The pioneering result is due to Graves \cite{Graves} in more general setting of Banach spaces, for problem \eqref{eq:main-nonlinear-y}. Graves' theorem for
finite-dimensional case claims, that if condition \[ ||g(x^a)-g(x^b)-A(x^a-x^b)|| \le C ||x^a-x^b|| \] holds in the
ball of radius $\rho$, centered at zero, for a matrix $A$ with minimal singular value $\mu > C > 0$, then a solution of the
equation \eqref{eq:main-nonlinear-y} exists provided $||y||$ is small enough, namely $ ||y|| \leq \rho (\mu -
C)$. The solution can be found via a version of modified Newton method, where next iteration requires solution of
the linear equation with matrix $A$, see \cite{Donchev,Magaril_Ilyaev-Tikhomirov-2008} for details.
The condition above gives rise to the mentioned metric regularity property.
However, finding the matrix $A$ is still a problem itself.

First of all let us specify the subproblem of finding a vector $z^k$ in \eqref{eq:newton-with-alpha_k-for-P}
for different norms of $x\in \R^n$. We skip simple verifications of the statements from convex analysis.
\begin{enumerate}
\item
For $||x||=||x||_1$ vector $z^k$ is a solution of the problem
\[
\min\{||z||_1 : P'(x^k)z=P(x^k) \}.
\]

\item
For $||x||=||x||_{\infty}$ vector $z^k$ is a solution of the problem
\[
\min\{ ||z||_{\infty} : P'(x^k)z = P(x^k) \}.
\]

Both problems above can be easily reduced to linear programming.

\item For $||x||=||x||_2$ vector $z^k$ can be written explicitly 
\[ z^k = P'(x^k)^\dagger P(x^k). \] 
In this case Newton method \eqref{eq:newton-with-alpha_k-for-P} coincide with 
\eqref{eq:newton-with-pseudoinverse}.
For $m \leq n$ Moore-Penrose pseudo-inverse of a matrix $A$ is written as $A^\dagger = A^T(A
A^T)^{-1}$, if $A$ has full row rank. 
\end{enumerate}

Thus in these (most important) cases algorithm
\eqref{eq:newton-with-alpha_k-for-P} can be implemented effectively. Also the solution of the first two problems may be non-unique.

An important case is the scalar one, i.e. $m=1$. We specify general results for
scalar equations and inequalities; the arising algorithms have much in common with unconstrained minimization methods. Finally we discuss nonlinear equations having some special structure. Then convergence results can be strongly enhanced.

\section{Preliminaries and feasibility (existence) theorems} \label{sec:preliminaries}

Key component in Newton method is the auxiliary convex optimization sub-problem, involving the linear constraint.
Note that the constraint 
\begin{equation} \label{eq:linear-equation} 
A z = b, \;\; b \in \R^m,\; z \in \R^n 
\end{equation} 
describes either a linear subspace, or the empty set. The classical
result below (which goes back to Banach, see \cite{Kant82,Magaril_Ilyaev-Tikhomirov-2008,Nesterov})
guarantees solvability of the linear equation (\ref{eq:linear-equation}) and gives an estimate of its solution. {\color{red}We prefer to provide the direct proof of the result because it is highly clear 
and short in finite-dimensional case.
Suppose} that spaces $\R^n, \R^m$ are equipped with some norms, the dual norms are denoted
$||\cdot||_*$ (for a linear functional $c$, associated with the vector of the same dimension,  
$\|c\|_* = \sup_{x : \|x\| = 1} (c, x)$).  
Operator {\color{red}norm is} subordinate with the vector norms, e.g. for $A: X\rightarrow Y$ we have
$\|Ax\|_Y \le \|A\|_{ X, Y} \|x\|_X$.
In most cases we do not specify vector norms; dual norms are obvious from the context. 
{\color{red}The adjoint operator $A^*$ is identified with matrix $A^T$.}

\begin{lemma} \label{lem:solution-of-linear-eq} If $A \in \R^{m \times n}$ satisfies condition 
\begin{equation}
\label{eq:mu-condition-for-A} \|A^T h\|_* \geq \mu_0 \|h\|_*, \;\; \mu_0 > 0, 
\end{equation}
for all $h \in \R^m$, then equation \eqref{eq:linear-equation} has a solution 
for all $b \in \R^m$, and all solutions of optimization problem 
\[ 
\widehat{z} \in \Arg \min \{\|z\| : A z = b\} 
\] 
have bounded norms $\|\widehat{z}\| \leq \frac{1}{\mu_0} {\color{red}\|b\|}$.
\end{lemma}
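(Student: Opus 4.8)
The plan is to separate the two claims — solvability of \eqref{eq:linear-equation} for every $b$, and the bound on the minimal-norm solution $\widehat z$ — handling the first by a rank argument and the second by Banach duality.

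First I would read off surjectivity of $A$ from \eqref{eq:mu-condition-for-A}. Because $\|\cdot\|_*$ is a genuine norm, the assumption with $\mu_0>0$ forces $A^T h\neq 0$ for every $h\neq 0$, i.e.\ $\ker A^T=\{0\}$; hence $\rank A^T=\rank A=m$, $A$ has full row rank, and the affine set $\{z:Az=b\}$ is nonempty for all $b\in\R^m$. That set is closed and the objective $\|z\|$ is continuous and coercive, so the minimum defining $\widehat z$ is attained. This settles existence of a solution and of $\widehat z$.

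For the norm estimate I would pass to the dual of the minimal-norm problem, in the spirit of the primal--dual pairs listed above. Weak duality is the easy half: for any feasible $z$ (so $Az=b$) and any $h\in\R^m$,
\[
(b,h)=(Az,h)=(z,A^T h)\le \|z\|\,\|A^T h\|_*,
\]
so $\|z\|\ge (b,h)/\|A^T h\|_*$ whenever $A^T h\neq 0$, and therefore $\|\widehat z\|\ge\max\{(b,h):\|A^T h\|_*\le 1\}$. The substantive step is the reverse inequality, i.e.\ the absence of a duality gap,
\[
\|\widehat z\|=\max\{(b,h):\|A^T h\|_*\le 1\},
\]
which is exactly the finite-dimensional Banach result invoked before the lemma. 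Granting it, the estimate follows from \eqref{eq:mu-condition-for-A}: every dual-feasible $h$ satisfies $\mu_0\|h\|_*\le\|A^T h\|_*\le 1$, so $\|h\|_*\le 1/\mu_0$, and the definition of the dual norm of $b$ then gives $(b,h)\le\|b\|_*/\mu_0$, whence $\|\widehat z\|\le\|b\|_*/\mu_0$.

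The crux is thus the no-gap equality. I would establish it either by quoting the cited Banach duality theorem directly, or constructively via Hahn--Banach: since \eqref{eq:mu-condition-for-A} makes the dual-feasible set $\{h:\|A^T h\|_*\le1\}$ bounded (indeed $\|h\|_*\le1/\mu_0$) and closed, hence compact, the dual maximum is attained, and the functional $A^T h\mapsto(b,h)$ --- well defined on the range of $A^T$ by injectivity of $A^T$, and bounded by $\mu_0^{-1}\|b\|_*$ in the relevant norm --- extends to a vector $\widehat z$ realizing both $A\widehat z=b$ and the bound. The delicate point in either route is the bookkeeping of primal versus dual norms across the pairing $(b,h)$, which is where the factor $\mu_0^{-1}$ and the norm $\|b\|_*$ enter; the rank and attainment arguments are routine.
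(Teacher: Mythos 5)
The paper does not actually prove this lemma: it is stated as a classical result ``which goes back to Banach'' with references to \cite{Kant82,Magaril_Ilyaev-Tikhomirov-2008,Nesterov}, so there is no in-paper argument to compare against. Your reconstruction is correct and is essentially the standard proof. Two remarks. First, your duality framing is more machinery than the claim requires: since every minimizer of $\min\{\|z\|:Az=b\}$ has the same norm, the bound $\|\widehat z\|\le\mu_0^{-1}\|b\|_*$ follows from exhibiting \emph{one} feasible point of that norm, and your Hahn--Banach construction already delivers exactly that (the functional $A^Th\mapsto(b,h)$ is well defined by injectivity of $A^T$, bounded by $\mu_0^{-1}\|b\|$ against $\|\cdot\|_*$ on $\R^n$, and its norm-preserving extension is represented by a vector $\widehat z$ with $A\widehat z=b$). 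That construction also yields surjectivity of $A$ for free, so the separate rank argument and the full no-gap equality $\|\widehat z\|=\max\{(b,h):\|A^Th\|_*\le1\}$, while correct, are not the crux. Second, on the norm bookkeeping you rightly flag: the pairing gives $(b,h)\le\|b\|\,\|h\|_*$ with the \emph{primal} norm of $b$ (the bidual norm coincides with the primal norm in finite dimensions), so the clean conclusion is $\|\widehat z\|\le\mu_0^{-1}\|b\|$; this is also how the paper later uses the lemma (with $u_k=\|g(x^k)-y\|$ unstarred), and the $\ast$ on $\|b\|_*$ in the statement should be read accordingly rather than as a second dualization. With that reading your argument is complete.
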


\begin{proof} 
{\color{red}
Fix $b \in \R^m$ and denote $K = \{x \in \R^n: ||x|| \leq \frac{\|b\|}{\mu_0}\}$.
This is a convex closed bounded set, 
and its linear image $Q = \{y \in \R^m: y = Ax, x \in K\}$ is convex closed bounded set as well. 
Suppose $b \notin Q$, then it can be strictly separated from $Q$: 
there exists $c \in \R^m: \max_{y \in Q}(c, y) < (c, b)$.
But $\max_{y \in Q}(c, y) = \max_{x \in K}(c, Ax) = \max_{x \in K}(A^Tc, x)
= \frac{\|b\|}{\mu_0}\|A^Tc\|_* \geq \|b\|\cdot\|c\|_*$, 
thus we get the contradiction: $\|b\| \cdot \|c\|_* < (c, b)$. 
Hence $b \in Q$, i.e. there exists $x \in \R^n: A x = b, \|x\| \leq \frac{||b||}{\mu_0}\}$.
A solution with the least norm obeys the same inequality.
}
\end{proof}

The Lemma is claiming that the matrix $A$ has full row rank equal to $m$ provided \eqref{eq:mu-condition-for-A} holds.
 It is another way to say that the mapping $A : \R^n \rightarrow \R^m$ is \emph{onto} mapping, i.e. covering all
image space. In the case of Euclidean norms, parameter $\mu_0$ is the smallest singular value of the matrix
$\mu_0 = \sigma_m(A)$ (we denote singular values of a matrix in $\R^{m \times n}$ in decreasing order as
$\sigma_1 \geq \sigma_2 \geq \ldots \geq \sigma_ m$). In general case the conjugate operator $A^*$ is used instead of $A^T$, and $\mu$ is the metric regularity constant.

Some results below will exploit the sum of double exponentials functions
$H_k \colon [0, 1) \rightarrow \R_+$, cf.~\cite{Polyak-1964}: 
$$H_k(\delta) = \sum_{\ell=k}^\infty \delta^{(2^\ell)}.$$ 
All functions $H_k(\cdot)$ are monotonically increasing and strictly convex. We also use two specific constants
\[
c_1 = H_0\Big(\frac{1}{2}\Big) \approx 0.8164215,
\]
and
\begin{equation} \label{eq:c_2-define}
c_2 = \max_{0 \leq r \leq \frac{1}{4}} 2 H_0\left(\frac{1}{2} - r\right)
 + 5 r - 4 r^2 - 2 c_1\approx 0.0036003.
\end{equation}
Trivial approximations
\begin{equation} \label{eq:H_k-Delta-approximations}
0 \leq H_k(\delta) \leq \frac{\delta^{(2^k)}}{1 - \delta^{(2^k)}}
= \frac{1}{\delta^{-(2^k)} - 1}.
\end{equation}
may be used for polynomial lower and upper bounds of $H_0(\delta) = \delta + \delta^2 + \delta^4 + \ldots + \delta^{(2^{k-1})} + H_k(\delta)$ with arbitrary precision. We also use property $H_k(\delta^2) = H_{k+1}(\delta)$. 

Below the problem of solvability of equation \eqref{eq:main-nonlinear-y} is addressed. We apply algorithm
\eqref{eq:newton-with-alpha_k-for-P} in the form
\begin{equation} \label{eq:newton-with-alpha_k-for-g} 
\begin{array}{l}
x^{k+1} = x^k - \alpha_k z^k, \;\; k = 0, 1, \ldots \\
z^k \in \Arg \min_z \{||z||: g'(x^k)z = g(x^k)-y\}.
\end{array}
\end{equation}
with small $\alpha$ and prove that the iterations converge while the limit
point is a solution. This techniques follows the idea from \cite{Polyak-1964}. Remind that $\R^n, \R^m$
are equipped with some norms, the dual norms are denoted $||\cdot||_*$.

\emph{Assumptions with respect to \eqref{eq:main-nonlinear-y}.}

$\mathbf{A}.$ 
$g(0)=0$ (i.e $\ol{x}=0$), $g(x)$ is differentiable in the ball $B=\{x\in \R^n: \|x\|\le \rho\}$, and its derivative $g'(x)$ satisfies Lipschitz condition in $B$:
\begin{equation} 
\nonumber \label{eq:lipschitz-deriv-g} 
\|g'(x^a) - g'(x^b)\| \leq L \|x^a - x^b\|. 
\end{equation}

$\mathbf{B}.$ 
The following inequality holds for all $x\in B$ and some fixed $\mu>0$: 
\begin{equation} \nonumber
 \|g'(x)^T h\|_* \geq \mu \|h\|_*, \; \; \forall h \in \R^m. 
 \end{equation}

$\mathbf{C}.$ 
$\|y\|<\mu\rho$.

\begin{theorem}\label{thm:existence-solution-for-g} 
If conditions $\mathbf{A}, \mathbf{B}, \mathbf{C}$ hold then there exists a solution $x^*$ of \eqref{eq:main-nonlinear-y}, and $\| x^*\| \leq \frac{\|y\|}{\mu}$. 
\end{theorem}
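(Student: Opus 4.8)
The plan is to run the damped Newton iteration \eqref{eq:newton-with-alpha_k-for-g} from $x^0=0$ with a small fixed stepsize $\alpha_k=\alpha$ and show that it produces a Cauchy sequence converging to a solution. First I would verify that the iteration is well-defined and stays inside the ball $B$. At each step, Assumption $\mathbf{B}$ together with Lemma~\ref{lem:solution-of-linear-eq} (applied to $A=g'(x^k)$ with $\mu_0=\mu$) guarantees that the linear system $g'(x^k)z = y-g(x^k)$ is solvable and that the minimal-norm solution obeys $\|z^k\| \le \frac{1}{\mu}\|g(x^k)-y\|_*$. This is the mechanism that converts the surjectivity condition into a quantitative bound on the step length.

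The heart of the argument is a recursive estimate on the residual $r^k := \|g(x^k)-y\|_*$. Using the exact linearization identity
\[
g(x^{k+1}) - y = g(x^k) - y - \alpha\, g'(x^k) z^k + \big(g(x^{k+1}) - g(x^k) - g'(x^k)(x^{k+1}-x^k)\big),
\]
and noting that $g'(x^k)z^k = y - g(x^k)$, the first two groups collapse to $(1-\alpha)(g(x^k)-y)$. The Lipschitz Assumption $\mathbf{A}$ controls the bracketed remainder by $\frac{L}{2}\|x^{k+1}-x^k\|^2 = \frac{L\alpha^2}{2}\|z^k\|^2 \le \frac{L\alpha^2}{2\mu^2}(r^k)^2$. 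Hence I obtain a scalar recursion of the form
\[
r^{k+1} \le (1-\alpha)\, r^k + \frac{L\alpha^2}{2\mu^2}(r^k)^2,
\]
which, once the coefficients are arranged favorably under Assumption $\mathbf{C}$, forces $r^k$ to decrease; the double-exponential functions $H_k$ introduced in the Preliminaries are presumably the sharp bookkeeping device for tracking this decay and summing the step lengths.

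From the residual bound I would derive a geometric-type bound on the steps $\alpha\|z^k\| \le \frac{\alpha}{\mu} r^k$, so that the total displacement $\sum_k \alpha\|z^k\|$ is summable. This yields both that the iterates remain in $B$ (the key invariant: $\|x^k\| \le \sum_{j<k}\alpha\|z^j\|$ must not exceed $\rho$, which is exactly what Assumption $\mathbf{C}$, $\|y\|<\mu\rho$, is calibrated to ensure) and that $\{x^k\}$ is Cauchy, hence convergent to some $x^*\in B$. Since $r^k\to 0$ along the way, continuity gives $g(x^*)=y$. The final norm estimate $\|x^*\|\le \frac{\|y\|}{\mu}$ follows by summing the step bounds and comparing against the geometric majorant $\sum_{k\ge 0}\alpha(1-\alpha)^k \frac{r^0}{\mu} = \frac{r^0}{\mu} = \frac{\|y\|}{\mu}$ in the idealized linear regime, with the superlinear residual decay only improving the constant.

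The main obstacle I anticipate is maintaining the ball invariant $x^k\in B$ simultaneously with the residual recursion: the Lipschitz and surjectivity bounds are only valid inside $B$, yet I need those same bounds to prove the iterates never leave $B$. Breaking this apparent circularity requires an inductive argument that establishes, at each step, that the cumulative displacement stays below $\rho$ \emph{before} invoking Assumptions $\mathbf{A}$ and $\mathbf{B}$ at the new point. Choosing $\alpha$ small enough (or tracking the exact constant $c = H_0(1/2)$ so that the quadratic term never overwhelms the contraction) should close this induction, but the bookkeeping to get the clean bound $\frac{\|y\|}{\mu}$ rather than a lossy constant is where the care lies.
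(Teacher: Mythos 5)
Your plan follows the paper's proof almost line for line: damped Newton from $x^0=0$ with a small constant stepsize, Lemma~\ref{lem:solution-of-linear-eq} to bound $\|z^k\|\le u_k/\mu$ with $u_k=\|g(x^k)-y\|$, the recursion $u_{k+1}\le(1-\alpha)u_k+\frac{L\alpha^2}{2\mu^2}u_k^2$, and summability of the steps to get both the ball invariant and the Cauchy property. The circularity you worry about is broken exactly as you suggest, by an induction in which the quantitative displacement bound is verified before Assumptions $\mathbf{A}$, $\mathbf{B}$ are invoked at the new iterate. One remark: the functions $H_k$ are not needed here at all --- they belong to the pure-Newton (quadratic-convergence) analysis of Theorem~\ref{thm:alg1}; for this theorem plain geometric decay $u_k\le q^k u_0$ suffices.

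There is, however, one concrete gap, and it sits precisely at the step you flag as delicate. For any \emph{fixed} $\alpha>0$ the quadratic term is a positive correction, so the contraction factor satisfies $q>1-\alpha$ and the residual decays \emph{more slowly} than $(1-\alpha)^k u_0$, not faster; your claim that the superlinear decay ``only improves the constant'' points the wrong way. Summing the steps then gives only $\|x^*\|\le\frac{\alpha}{1-q}\cdot\frac{u_0}{\mu}$, which is strictly larger than $\frac{u_0}{\mu}$, and no single choice of $\alpha$ yields the clean bound. The paper closes this by introducing a one-parameter family: with $\alpha=\varepsilon\,\frac{2\mu^2}{Lu_0}\bigl(1-\frac{u_0}{\mu\rho}\bigr)$ one gets $q=1-\alpha+\alpha\varepsilon\bigl(1-\frac{u_0}{\mu\rho}\bigr)$, hence $\frac{\alpha}{1-q}=\bigl(1-\varepsilon(1-\frac{u_0}{\mu\rho})\bigr)^{-1}\to 1$ as $\varepsilon\to 0$ (this same computation, combined with $\mathbf{C}$, gives $\|x^k\|<\rho$ for every $\varepsilon<1$). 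Each $\varepsilon$ produces a solution $x^*(\varepsilon)$ with $\|x^*(\varepsilon)\|\le\frac{\alpha}{1-q}\cdot\frac{u_0}{\mu}$, and the bound $\|x^*\|\le\frac{\|y\|}{\mu}$ is obtained only by passing to a limit point of this family as $\varepsilon\to 0$. Without this limiting argument over a family of solutions, your proof delivers existence but only a lossy norm estimate.
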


\begin{proof} 
We apply algorithm \eqref{eq:newton-with-alpha_k-for-g} with $\alpha>0$ small enough and
$x^0=0$. The algorithm is well defined --- condition $\mathbf{B}$ and Lemma~\ref{lem:solution-of-linear-eq} imply existence of solutions $z^k$ provided that $x^k\in B$; this is true for $k=0$ and will be validated recurrently. Standard formula
$$
g(x+z)=g(x)+\int_0^1 g'(x+tz)z dt
$$ 
combined with condition $\mathbf{A}$ provides for 
$x=x^k, \; z=-\alpha z^k$ and $u_k=\|g(x^k)-y\|$ recurrent relation 
$$
u_{k+1}\leq |1-\alpha| u_k+\frac{L\alpha^2}{2}\|z^k\|^2.
$$
Now condition $\mathbf{B}$ and Lemma~\ref{lem:solution-of-linear-eq} transform this estimate into 
$$u_{k+1}\leq |1-\alpha| u_k+\frac{L\alpha^2 u_k^2}{2\mu^2}. $$

Choose $\alpha = \eps \frac{2\mu^2}{L u_0}(1-\frac{u_0}{\mu\rho})$ with small $\eps < 1$ satisfying $0 < \alpha < 1$; it is possible due to condition $\mathbf{C}$. From the above inequality we get $u_{k+1} \leq u_k(1-\alpha + \alpha \eps \frac{u_k}{u_0}(1 - \frac{u_0}{\mu\rho}))$. 
For $k=0$ this implies $u_1 < u_0$ and recurrently $u_{k+1} < u_k$. We also get $u_{k+1} \leq q u_k,\; q = (1-\alpha + \alpha \eps (1-\frac{u_0}{\mu\rho})) < 1$. 
Thus $u_k \le q^k u_0$ and $u_k \rightarrow 0$ for $k\rightarrow \infty$.

On the other hand we have $\|x^{k+1}-x^k\|=\alpha\|z^k\| \le \frac{\|g(x^k)-y\|}{\mu} = \frac{u_k}{\mu} \le q^k \frac{u_0}{\mu}$. Hence for any $k, s$ and for
$k\rightarrow\infty$ 
\[ 
\|x^{k+s} - x^k\| \leq \sum_{i=k}^{k+s-1}\|x^{i+1}-x^i\| \leq q^k \frac{u_0}{(1-q)\mu} \rightarrow 0. 
\]
It means that $x^k$ is a Cauchy sequence and converges to some point $x^*(\eps)$. We
had $g(x^k) \rightarrow y$, thus continuity reasons imply $g(x^*(\eps))=y$. Now, for all iterations we got $\|x^k-x^0\|=\|x^k\| \leq \sum_{j=0}^{k-1} \|x^{j+1} - x^j\|\leq \alpha
\frac{u_0}{\mu (1-q)} = \frac{u_0}{\mu}\frac{1}{1-\eps (1-\frac{u_0}{\mu\rho})} < \frac{u_0}{\mu}\frac{1}{1- (1-\frac{u_0}{\mu\rho})} =
\rho$. 
Hence all iterations $x^k$ remain in the ball $B$ and our reasoning was correct.
Finally under $\|x^k\| \leq \frac{u_0}{\mu} \frac{1}{1 - \varepsilon(1 - \frac{u_0}{\mu \rho})}$ we take $\varepsilon\rightarrow 0$, leading to $\|x^k\| \leq \frac{u_0}{\mu}$, so its limit point $x^*(\varepsilon)|_{\varepsilon \rightarrow 0}$. The limit point $x^*(\eps)|_{\eps \rightarrow 0} = x^*$ is a solution as well and $\|x^*\|\le \frac{u_0}{\mu}$.
\end{proof}

\begin{corollary} \label{cor:global-convergence-for-rho-infty} 
If $\rho=\infty$ (that is conditions $\mathbf{A}, \mathbf{B}$ hold on the entire space $\R^n$) then equation \eqref{eq:main-nonlinear-y} has a solution for an arbitrary right-hand side $y$.
\end{corollary}

It is worth noting that if we apply pure Newton method (i.e. take $\alpha_k\equiv 1$), the conditions of its convergence are more restrictive: we need $\|y\|\le \frac{2\mu^2}{L}$, that is we guarantee only local convergence even for $\rho=\infty$. This is a corollary of Newton-Mysovskikh theorem \cite{Kant82}, which proof is valid for under-determined case as well, cf. also \cite{Polyak-1964}.

\begin{corollary}
If $m=n$ and Condition $\mathbf{B}$ is replaced with $\|g'(x)^{-1}\|\leq \frac{1}{\mu}, x\in B$, then the statement of Theorem~\ref{thm:existence-solution-for-g} holds true.
\end{corollary}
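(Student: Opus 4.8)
The plan is to reduce this corollary to Theorem~\ref{thm:existence-solution-for-g} by verifying that the hypothesis $\|g'(x)^{-1}\|\le\frac{1}{\mu}$ (for $m=n$) implies Condition $\mathbf{B}$, namely $\|g'(x)^T h\|_*\ge\mu\|h\|_*$ for all $h$. Since Conditions $\mathbf{A}$ and $\mathbf{C}$ are carried over unchanged, the only work is this one implication; once it is established, the conclusion (existence of $x^*$ with $\|x^*\|\le\frac{\|y\|}{\mu}$) follows verbatim from the theorem.

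To prove the implication, first I would observe that when $m=n$ and $g'(x)$ is invertible, $A=g'(x)^T$ is also invertible with $(A^T)^{-1}=(A^{-1})^T$. The natural tool is the standard duality identity for operator norms: the norm of a matrix $A$ as a map $(\R^n,\|\cdot\|)\to(\R^n,\|\cdot\|)$ equals the norm of its transpose $A^T$ as a map between the dual spaces, $\|A\|=\|A^T\|_*$. Applying this to $A=g'(x)^{-1}$ gives $\|(g'(x)^{-1})^T\|_* = \|g'(x)^{-1}\|\le\frac1\mu$, and $(g'(x)^{-1})^T=(g'(x)^T)^{-1}$.

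With the bound $\|(g'(x)^T)^{-1}\|_*\le\frac1\mu$ in hand, the estimate $\|g'(x)^T h\|_*\ge\mu\|h\|_*$ is a routine consequence. For any $h$, set $w=g'(x)^T h$, so $h=(g'(x)^T)^{-1}w$ and $\|h\|_*=\|(g'(x)^T)^{-1}w\|_*\le\frac1\mu\|w\|_*=\frac1\mu\|g'(x)^T h\|_*$. Rearranging yields exactly Condition $\mathbf{B}$ with constant $\mu$. (Equivalently, for an invertible operator the reciprocal of the norm of the inverse is the minimal ``stretching factor,'' i.e. $\inf_{\|h\|_*=1}\|g'(x)^T h\|_* = 1/\|(g'(x)^T)^{-1}\|_*\ge\mu$.)

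The main obstacle I anticipate is purely bookkeeping about which norm lives on which space: one must be careful that the operator-norm duality $\|A\|=\|A^T\|_*$ is being applied with the correct pairing of primal and dual norms on $\R^n$ and $\R^m$ (here $m=n$, but the two copies still carry possibly distinct roles as domain and codomain), and that the dual of the dual norm returns the original norm so the constant $\mu$ is not inadvertently altered. No genuine analytic difficulty arises — the substance is the theorem already proved, and this corollary merely translates the more familiar invertibility-type hypothesis into the covering-type Condition $\mathbf{B}$.
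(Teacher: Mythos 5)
Your proposal is correct and is essentially the intended argument: the paper states this corollary without proof, and the natural justification is exactly your reduction, using the duality identity $\|A\|=\|A^T\|_*$ (which the paper itself invokes in the proof of Theorem~2) together with $(g'(x)^{-1})^T=(g'(x)^T)^{-1}$ to convert the bound on the inverse into Condition $\mathbf{B}$ with the same constant $\mu$. Your handling of the primal/dual norm bookkeeping is also right, so nothing is missing.
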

In this case our method \eqref{eq:newton-with-alpha_k-for-g} reduces to classical Newton method \eqref{eq:classic-newton}.

There exist numerous results on solvability of \eqref{eq:main-nonlinear-y}. Some of them are stronger than Theorem~\ref{thm:existence-solution-for-g} and are based on general notion of metric regularity \cite{Donchev-Rock,Ioffe}. We provided the proof based on our technique to exhibit its applicability to existence theorems.

\section{Main algorithms}\label{sec:main-algorithms}
Here it is more convenient to use the main equation in form \eqref{eq:main-nonlinear-P}. 
In previous Section we proved solvability of equation by use of the algorithm with constant $\alpha_k \equiv \alpha>0$; choosing $\alpha$ smaller we obtained larger solvability domain. 

However in this Section our goal is different --- to reach the fastest convergence to a solution. For this purpose different strategies for
design of step-sizes are needed. The basic policy is as follows. First, we rewrite assumptions in new notation. We remark that the assumptions in context of equation $P(x) = 0$ are very much the same as $\mathbf{A}, \mathbf{B}$.

$\mathbf{A'}.$ $P(x)$ is differentiable in the ball $B=\{x\in \R^n: \|x-x^0\|\le \rho\}$, and its derivative $P'(x)$ satisfies
Lipschitz condition in $B$:
\begin{equation} \nonumber \label{eq:lipschitz-deriv-P} 
\|P'(x^a) - P'(x^b)\| \leq L \|x^a - x^b\|. 
\end{equation}

$\mathbf{B'}.$ The following inequality holds for all $x\in B$ and some $\mu>0$: 
\begin{equation} \nonumber
 \|P'(x)^T h\|_* \geq \mu
\|h\|_*, \; \; \forall h \in \R^m. 
\end{equation}

If $\mathbf{A'}, \mathbf{B'}$ hold true, we have the same recurrent inequalities for $u_k=\|P({\color{red}x^k})\|$:
\begin{equation}
\label{eq:ineq1}
u_{k+1}\le |1-\alpha_k| u_k + \frac{L\alpha_k^2\|z^k\|^2}{2},
\end{equation}

\begin{equation}
\label{eq:ineq2}
u_{k+1}\le |1-\alpha_k| u_k + \frac{L\alpha_k^2u_k^2}{2\mu^2},
\end{equation}
the second one being just continuation of the first one based on the estimate $\|z^k\| \le \frac{u_k}{\mu}$, compare with the calculations in the proof of Theorem~\ref{thm:existence-solution-for-g}. Now we can minimize right-hand sides of these inequalities over $\alpha_k$; it is natural to expect that such choice of step-size imply the fastest convergence of $u_k$ to zero and thus the fastest convergence of iterations $x_k$ to the solution. One of the main contributions of this paper is careful analysis of the resulting method.

If one applies such policy based on inequality \eqref{eq:ineq2}, optimal $\alpha$ depends on $\mu, L$ (Algorithm~1 below). The values are hard to estimate in most applications, thus the method would be hard for implementation. Fortunately, we can modify the algorithm using parameter adjustment (Algorithm~2). On the other hand the same policy
based on \eqref{eq:ineq1} requires just the value $L$, which is commonly available (Algorithm~3).

Thus we arrive to an algorithm which we call \emph{Newton method} while in fact it is blended \emph{pure Newton} with \emph{damped Newton} with special rule for damping. 
In some relation it reminds \emph{Newton method} for minimization of self-concordant functions \cite{NN}. Despite its simplicity, the idea of minimizing upper bound seems to be unexplored (or long forgotten) in Newton method theory with respect to equations. Authors found similar choice of step-size in \cite{Deuflhard-2004}, in different conditions and without explicit convergence bounds.

\subsection{Newton method with known constants}
\label{sec:newton-L-mu-known}

If both constants $L$ and $\mu$ are known, then the step-size is taken as the minimizer of right-hand side of \eqref{eq:ineq2}:
\begin{equation} \label{eq:alpha_k-for-L-mu}
\alpha_k = \arg \min_\alpha \Big( |1-\alpha|\cdot \|P(x^k)\| + \frac{L\alpha^2 \|P(x^k)\|^2}{2\mu^2} \Big) = \min\Big\{1, \frac{\mu^2}{L \|P(x^k)\|} \Big\}.
\end{equation}

\begin{framed}
    \begin{minipage}{0.8\linewidth}
        Algorithm 1 (Basic Newton method)
        \begin{equation} \nonumber
        \begin{aligned}
            & z^k \in \Arg \min_{P'(x^k) z = P(x^k)} \|z\|,
        \end{aligned}
        \end{equation}
        \begin{equation} \label{eq:x_k-update-algorithm1}
        \begin{aligned}
            & x^{k+1} = x^k - \min\Big\{1, \frac{\mu^2}{L \|P(x^k)\|} \Big\}z^k, \; k \geq 0.
        \end{aligned}
        \end{equation}
    \end{minipage}
\end{framed}

The algorithm is well-defined, as soon $\|P(x^k)\| = 0$ means that a solution is already found (formally $z^k = 0, \; \alpha_k = 0$ thereafter).
We remind that in calculation of $z^k$ any vector norm in $\R^n$ can be used, also any vector norm in $\R^m$ can be used for $\|P(x^k)\|$, and constants $L, \mu$ must comply with these norms.

The update step in \eqref{eq:x_k-update-algorithm1} can be written in less compact but more illustrative form:
\[
\begin{array}{lll}
\displaystyle
x^{k+1} = x^k - \frac{\mu^2}{L \|P(x^k)\|} z^k, & \displaystyle \text{ if } \|P(x^k)\| \geq \frac{\mu^2}{L} & \text{ (Stage 1 step)}, \\[4mm]
\displaystyle x^{k+1} = x^k - z^k, & \displaystyle \text{ otherwise} &\text{ (Stage 2 step)}. \\
\end{array}
\]
The latter case is a pure Newton step while the primal one is a damped Newton step. Direction $z^k$ calculation is the same in both stages. The result on convergence and rate of convergence is given below.
We use upper ($\lceil \cdot \rceil$) and lower ($\lfloor \cdot \rfloor$) rounding to integer; constant $c_1 \approx 0.8164$ was introduced in the end of Section~\ref{sec:preliminaries}.
The theorem is followed by the corollary with simpler statements.

\begin{theorem} \label{thm:alg1-exact} Suppose that Assumptions~$\mathbf{A'}, \mathbf{B'}$ hold and
\begin{equation} \label{eq:cond-P_0-alg1}
\|P(x^0)\| \leq \frac{\mu^2}{L} F^{\inv}_1\Big( \frac{L}{\mu} \rho \Big), 
\end{equation} 
where $F^{\inv}_1(\cdot)$ is the inverse function for the continuous strictly increasing function $F_1(w)$, given by
\begin{subequations} \label{eq:cond-p-alg1}
\begin{empheq}[left={\displaystyle F_1(w) = \empheqlbrace}]{align}
\label{eq:cond-p-alg1-a}
\displaystyle
&2 H_0\Big(\frac{w}{2}\Big), & 0 \leq w \leq 1, \\
\label{eq:cond-p-alg1-b}
&\lceil 2 w \rceil - 2 + 2 H_0\Big(\frac{1}{2} - \frac{\lceil 2 w \rceil - 2 w}{4}\Big), & w > 1.
\end{empheq}
\end{subequations}

Then the sequence $\{x^k\}$ generated by Algorithm 1 converges to a solution $x^* : P(x^*) = 0$.
The values $\|P(x^k)\|$ are monotonically decreasing, and there are not more than 
\begin{equation} \label{eq:k_max-definition} 
k_{\max} = \max\Big\{0, \;\left\lceil \frac{2 L}{\mu^2}\|P(x^0)\|\right\rceil - 2 \Big\} 
\end{equation}
iterations at Stage~1, then followed by Stage~2 steps. At $k$-th step the following estimates for the rate of convergence hold:
\begin{subequations} \label{eq:P_k-and-x_k-alg1-exact} 
\begin{align} 
\label{eq:P_k-alg1-stage1-exact} 
\|P(x^k)\| & \leq \|P(x^0)\| - \frac{\mu^2}{2L}k, & k < k_{\max}, 
\\ 
\label{eq:x_k-alg1-stage1-exact}
\|x^{k} - x^*\| & \leq \frac{\mu}{L} \big( k_{\max} - k + 2 H_0\Big(\frac{\ol{w}}{2}\Big) \big), & k < k_{\max},
\\ 
\label{eq:P_k-alg1-stage2-exact}
\|P(x^k)\| & \leq \frac{2\mu^2}{L} \Big(\frac{\ol{w}}{2}\Big)^{(2^{(k-k_{\max})})}, & k \geq k_{\max},
\\ 
\label{eq:x_k-alg1-stage2-exact}
\|x^{k} - x^*\| & \leq \frac{2\mu}{L}H_{k - k_{\max}}\Big(\frac{\ol{w}}{2}\Big), & k \geq k_{\max}.
\end{align}
\end{subequations} 
where $\ol{w} = \frac{L}{\mu^2}\|P(x^0)\| - \frac{k_{\max}}{2} = \min\big\{\frac{L}{\mu^2}\|P(x^0)\|,$ $1 - \frac{1}{2}\big\lceil \frac{2L}{\mu^2}\|P(x^0)\|\big\rceil + \frac{L}{\mu^2}\|P(x^0)\| \big\} \in [0, 1)$.
\end{theorem}
The Theorem's statement may look quite involved, but both functions $H_0(\delta)$ and $F_1^{\inv}(p)$ are easily calculated in practice. In the interval of interest $\delta \in [0, \frac{1}{2}]$, the former function has rational approximation \eqref{eq:H_k-Delta-approximations}. The latter function can be evaluated with needed accuracy via binary search, as soon $F_1(w)$ is monotonically increasing on $R_+$.

\begin{proof}
Assume that all $x^k \in B, \; k \geq 0$. Below we state condition enabling this assumption.
Using $w_k = \frac{L}{\mu^2}\|P(x^k)\|$ as the objective function, we rewrite \eqref{eq:ineq2} with generic step-size $\alpha$ as
\begin{equation} \label{eq:w_k-alg1-relation}
w_{k+1} \leq |1 - \alpha| w_k + \frac{1}{2}\alpha^2 w_k^2.
\end{equation}
Its optimum over $\alpha$ is at $\alpha_k = \frac{1}{w_k} < 1$,
if $w_k > 1$; and $\alpha_k = 1$ otherwise; it is exactly \eqref{eq:alpha_k-for-L-mu}.

During Stage~1 of damped Newton steps ($\alpha_k < 1$), the objective function monotonically decreases as
\begin{equation} \label{eq:w_k-alg1-stage1}
w_{k+1} \leq w_k - \frac{1}{2}.
\end{equation}
There are at most $k_{\max} = \max\{0, \; \lceil 2 w_0 \rceil - 2\}$ iterations in the phase, say $\ol{k}$ ones, resulting in $w_{\ol{k}} \leq 1$. As soon $w_k$ reaches this unit threshold, the algorithm switches to Stage~2, pure Newton steps. Then recurrent relation \eqref{eq:w_k-alg1-relation} becomes 
\[
w_{k+1} \leq \frac{1}{2} w_k^2, \; k \geq \ol{k}. 
\] 
so we can write
\begin{equation} \label{eq:w_k-alg1-stage2} 
w_{\ol{k}+\ell} \leq 2 \left(\frac{w_{\ol{k}}}{2}\right)^{(2^\ell)}
, \; \ell \geq 0. 
\end{equation}

For the second phase $\|x^{i+1} - x^i\| = \|z^i\| \leq \frac{1}{\mu} \|P(x^i)\| = \frac{\mu}{L} w_i$ due Lemma~\ref{lem:solution-of-linear-eq}, and for $\ell_2 \geq \ell_1 \geq 0$ holds 
\begin{equation} \label{eq:x_a-x_b-alg1-stage2} 
\|x^{\ol{k} + \ell_2} - x^{\ol{k} + \ell_1}\| 
\leq \sum_{i = \ell_1}^{\ell_2-1} \|x^{\ol{k}+i+1} - x^{\ol{k}+i}\| 
\leq \frac{2 \mu}{L} \big(H_{\ell_1}\Big(\frac{w_{\ol{k}}}{2}\Big) -
H_{\ell_2}\Big(\frac{w_{\ol{k}}}{2}\Big)\big). 
\end{equation} 
The sequence $\{x^k\}$ is a Cauchy sequence because $H_j(\frac{w_{\ol{k}}}{2}) \leq H_j(\frac12) \rightarrow_{j \rightarrow \infty} 0$. It converges to a point $x^* : \|P(x^*)\| = \lim_{k \rightarrow \infty} \|P(x^k)\| = 0$ due to continuity of $P$, with 
\begin{equation} \label{eq:x_k-x*-alg1-stage2} 
\|x^{\ol{k} + \ell} - x^*\| 
\leq \frac{2 \mu}{L} H_\ell \left(\frac{w_{\ol{k}}}{2}\right) 
, \; \ell \geq 0. 
\end{equation}

Next we are to estimate distance from points $x^k$ in Stage 1 to the limit solution point $x^*$. One-step distance for $k < \ol{k}$ is bounded by a constant: $\|x^{k+1} - x^k\| = \alpha_k\|z^k\| \leq \alpha_k {\color{red}\frac{\mu}{L}} w_k = \frac{\mu}{L},$ and altogether
\begin{equation} \label{eq:x_k-x*-alg1} 
\|x^{k} - x^*\| 
\leq \|x^{\ol{k}} - x^*\| + \sum_{i = k}^{\ol{k}-1} \|x^{i+1} - x^i\| 
\leq \frac{\mu}{L} \big(\ol{k} - k + 2 H_0\left(\frac{w_{\ol{k}}}{2}\right) \big), \; k < \ol{k}. 
\end{equation} 
Note that the formula also coincides with the upper bound \eqref{eq:x_k-x*-alg1-stage2} at $k = \ol{k}$. Exact number $\ol{k}$ of the steps in the first phase is not known, but we can replace it with the upper bound $k_{\max}$ in all estimates \eqref{eq:w_k-alg1-stage1}--\eqref{eq:x_k-x*-alg1}, due to monotonic decrease of $\{w_k\}$. We also have an upper bound for $w_{k_{\max}} \leq \ol{w} = w_0 - \frac{1}{2} k_{\max} = w_0 - \frac{\max\{0, \lceil 2 w_0 \rceil - 2\}}{2} \in [0, 1]$.
Substituting $w_k = \frac{L}{\mu^2}\|P(x^k)\|$ back we arrive to Theorem~\ref{thm:alg1-exact} bounds \eqref{eq:P_k-and-x_k-alg1-exact}.

Finally we are to check our primal assumption of the algorithm-generated points $x^k$ being within $B$. This is guaranteed by one of two conditions, depending on whether the Algorithm starts from Stage~1 step or Stage~2 step.

In the first case $w_0 > 1$, and $\|x^0 - x^k\|$ is bounded similarly to~\eqref{eq:x_k-x*-alg1} as
\begin{align}
\nonumber
\|x^0 - x^k\| \leq & \sum_{i = 0}^{k-1} \|x^{i+1} - x^i\| \leq \sum_{i = 0}^{\ol{k}-1} \|x^{i+1} - x^i\| + \sum_{i = \ol{k}}^{\infty} \|x^{i+1} - x^i\|\leq \\
\nonumber 
\leq & \frac{\mu}{L} \big(\ol{k} + 2 H_0\left(\frac{w_{\ol{k}}}{2}\right) \big) 
\leq \frac{\mu}{L} \big(k_{\max} + 2 H_0\left(\frac{w_{k_{\max}}}{2}\right)\big) \leq \\
\nonumber
\leq & \frac{\mu}{L} \big(k_{\max} + 2 H_0\left(\frac{\ol{w}}{2}\right)\big) = \\
= & \frac{\mu}{L} \left( \lceil 2 w_0 \rceil - 2 + 2 H_0\Big(\frac{1}{2} - \frac{\lceil 2 w_0 \rceil - 2 w_0}{4}\Big) \right).
\end{align}
Here we also used $k_{\max} = \lceil 2 w_0 \rceil - 2 > 0$ and the upper bound $w_{k_{\max}} \leq \ol{w}$. In other words, given $w_0 = \frac{L}{\mu^2}\|P(x^0)\| > 1$, for $\{x^k\} \in B$ it is sufficient to satisfy $\frac{L}{\mu} \rho \geq F_1\big(w_0\big)$. This corresponds to \eqref{eq:cond-p-alg1-b} part.

In the second case we have $w_0 \leq 1$, and the algorithm makes pure Newton steps with $\alpha_k \equiv 1$ from the beginning. Then $\ol{k} = 0$, $w_{k_{\max}} = w_0$ and from \eqref{eq:x_a-x_b-alg1-stage2} follows
\[
\|x^k - x^0\| \leq \frac{2 \mu}{L} (H_0\Big(\frac{w_0}{2}\Big) - H_k\Big(\frac{w_0}{2}\Big)) \leq \frac{2 \mu}{L} H_0\Big(\frac{w_0}{2}\Big), \; k \geq 0.
\]
Therefore if $w_0 \leq 1$, then inequality $\|x^0 - x^k\| \leq \rho, \; k \geq 0$ is satisfied if $\frac{L}{\mu} \rho \geq 2 H_0\big(\frac{w_0}{2}\big) {\color{red} = F_1(w_0)}$.
This corresponds to \eqref{eq:cond-p-alg1-a} part.

Gluing the cases $w_0 \leq 1$ and $w_0 > 1$ we arrive to the sufficient condition $\rho \geq \frac{\mu}{L}F_1\big({\color{red}\frac{L}{\mu^2}} \|P(x^0)\| \big)$, resulting in $x^k \in B$. Due to $F_1(w)$ being strictly increasing this condition is equivalent to \eqref{eq:cond-P_0-alg1}.
\end{proof}

Result on the rate of convergence means, roughly speaking, that after no more than $k_{\max}$ iterations one has very fast (quadratic) convergence. For good initial approximations $k_{\max}=0$, and pure Newton method steps are performed from the very start.

\begin{corollary}
If $\rho = \infty$ (that is conditions $\mathbf{A'}, \mathbf{B'}$ hold on the entire space $\R^n$), 
then Algorithm~1 converges to a solution of \eqref{eq:main-nonlinear-P} for any $x^0 \in \R^n$.
\end{corollary}

The following corollary provides simpler tight {\color{red}relaxed} condition for Theorem~\ref{thm:alg1-exact}. 
The idea is to develop an upper bound for \eqref{eq:cond-p-alg1}, resulting in a  lower bound on \eqref{eq:cond-P_0-alg1}.
\begin{corollary} \label{cor:alg1-simple-bound}
Condition \eqref{eq:cond-P_0-alg1} can be replaced with piece-wise linear one
\[
\|P(x^0)\| \leq \frac{\mu^2}{L}F_1^{\inv, \lw}\Big( \frac{L}{\mu}\rho \Big) = \frac{\mu^2}{L} \times \left\{
\begin{array}{cl}
\displaystyle
\frac{1}{2 (2 c_1 - 1)} \frac{L}{\mu}\rho, & \displaystyle \;\; 0 \leq \rho \leq (2 c_1 - 1)\frac{\mu}{L}, \\[4mm]
\displaystyle
\frac{L}{2\mu}\rho + 1 - c_1, & \displaystyle \;\; \rho > (2 c_1 - 1)\frac{\mu}{L}.
\end{array}
\right.
\]
\end{corollary}
\begin{proof}
In order to find \emph{a lower} bound for $F_1^{\inv}(\cdot)$, we are to prove \emph{an upper} bound for the function $F_1(w)$, \eqref{eq:cond-p-alg1}. Both bounds are continuous strictly increasing functions.
First we notice, that both \eqref{eq:cond-p-alg1-a} and \eqref{eq:cond-p-alg1-b}
coincide at $w \in [\frac{1}{2}, 1]$, and it can be rewritten through the different junction point $w = 1/2$ instead of $w = 1$ 
\begin{equation} \nonumber
F_1(w) = 
\left\{
\begin{array}{ll}
\displaystyle
2 H_0\Big(\frac{w}{2}\Big), & 
\displaystyle 0 \leq w \leq \frac{1}{2}, \\
\displaystyle
\lceil 2 w \rceil - 2 + 2 H_0\Big(\frac{1}{2} - \frac{\lceil 2 w \rceil - 2 w}{4}\Big), & 
\displaystyle w > \frac{1}{2}.
\end{array}
\right.
\end{equation}
Due to convexity on interval $[0, \frac{1}{2}]$, 
function $H_0(\delta)$ is bounded by a secant segment:
$$
2 H_0\Big(\frac{w}{2}\Big) \leq 2 H_0\Big(\frac{1}{4}\Big)\cdot(2 w) = 2(2 c_1 - 1) w.
$$
Here we used property $H_0\big(\frac{1}{4}\big) = H_0\big(\frac{1}{2}\big) - \frac{1}{2}$, which follows from the {\color{red} identity} $H_k(x) = x^{(2^k)} + H_k(x^2), \;\; x {\color{red}\in[0, 1)}$; and constant $c_1 = H_0(\frac{1}{2})$ is introduced in Section~\ref{sec:preliminaries}.

Next we show that $F_1(w) \leq 2 (w + c_1 - 1)$ {\color{red} for} $w \geq \frac{1}{2}$.
Indeed, following continuous function is periodic on $w, \; w \geq \frac{1}{2}$. It can be also written through the variable $r = \frac{1 + 2 w - \lceil 2w \rceil}{2} \in (0, \frac{1}{2}]$.
\begin{align}
\nonumber 
F_1(w)\! -\! 2(w + c_1\! -\! 1)\! & =\! 2\left(\! \frac{\lceil 2w \rceil\! -\! 2 w}{2} \!+\! H_0\Big(\frac{1}{2} \!-\! \frac{\lceil 2w \rceil \!-\! 2w}{4} \Big) \!-\! H_0\Big(\frac{1}{2}\Big) \!\right) = \\
\nonumber
& = 2\left(-r + H_0\left(\frac{1}{4} + \frac{r}{2} \right) - H_0\left({\color{red}\frac{1}{4}}\right) \right) =\\
& = 2\left(H_0\left(\frac{1}{4} + \frac{r}{2} \right) - \Big(H_0\left(\frac{1}{4}\right) + r\Big)\right) \leq 0.
\end{align}
{\color{red}In the second row identity $H_0(\frac{1}{2})=\frac{1}{2}+H_0(\frac{1}{4})$ is used.
The last inequality is due to convexity of $H_0(\delta)$, which is under secant segment on corresponding interval $[\frac{1}{4}, \frac{1}{2}]$.}

Thus we have the monotonically increasing piece-wise linear upper bound for \eqref{eq:cond-p-alg1}
\[
F_1(w) \leq F_1^{\up}(w) = \left\{
\begin{array}{cl}
\displaystyle
2 (2 c_1 - 1) w, & \displaystyle \; 0 \leq w \leq \frac{1}{2}, \\[2mm]
\displaystyle
2 ( w + c_1 - 1), & \displaystyle \; w > \frac{1}{2}.
\end{array}
\right.
\]
Its inverse {\color{red}$F_1^{\inv, \lw}$ with property} $F_1^{\inv, \lw}(F_1^{{\color{red}\up}}(w)) \equiv w, w \geq 0$ is the piece-wise linear lower bound for $F_1^{\inv}$:
\[
F_1^{\inv}(p) \geq F_1^{\inv, \lw}(p) = \left\{
\begin{array}{cl}
\displaystyle
\frac{1}{2 (2 c_1 - 1)} p, & \displaystyle \; 0 \leq p \leq 2 c_1 - 1, \\[4mm]
\displaystyle
\frac{p}{2} + 1 - c_1, & \displaystyle \; p > 2 c_1 - 1.
\end{array}
\right.
\]
Substituting back $p = \frac{L}{\mu}\rho$ and $w = \frac{L}{\mu^2} \|P(x^0)\|$ results in the Corollary statement.
\end{proof}
On Figure~\ref{fig:alg1-w-on-p-bounds} the bound and its residuals are plotted. From the proof it is clear the bounds are tight.

The linear upper (lower) bounds of $F_1(\cdot), F_1^{\inv}(\cdot)$ for the interval $w \in [0, \frac{1}{2}]$ were chosen for consistency with linear bounds on $w \in [\frac{1}{2}, \infty)$. Bound residual on these two intervals are also in the same order, cf. Figure~\ref{fig:alg1-w-on-p-bounds}. In Corollary~\ref{cor:alg3-simple-bound} of Theorem~\ref{thm:alg3-exact} we present refined, quadratic approximation for $F_1(w) = 2H_0(\frac{w}{2}), \; w \leq \frac{1}{2}$.

\begin{figure} 
\caption{Algorithm 1: function $F_1^{\inv}(p)$ and lower bound $F_1^{\inv,\lw}(p)$ (left), residual $F_1^{\inv}(p) - F_1^{\inv, \lw}(p)$ (right).}
\label{fig:alg1-w-on-p-bounds}
\includegraphics[width=0.5\linewidth]{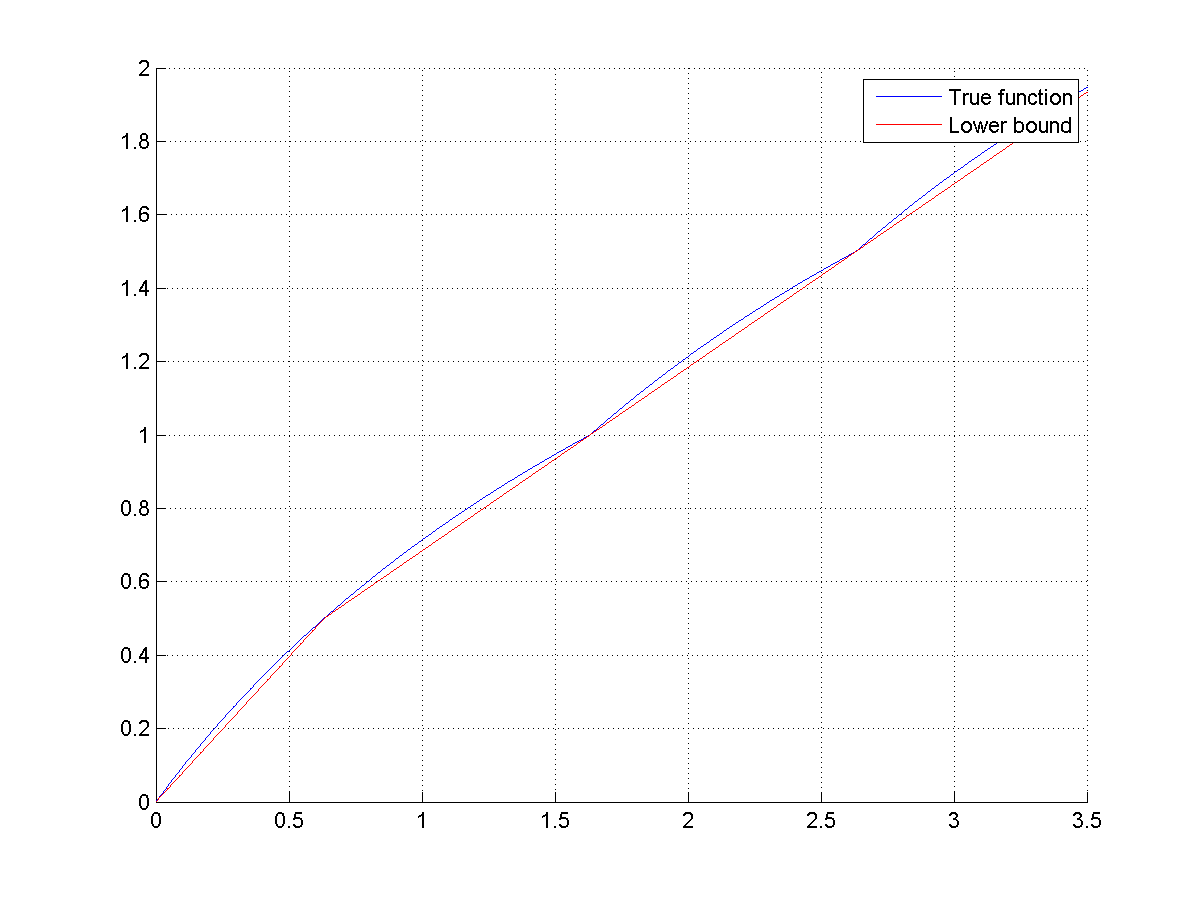}%
\label{fig:alg1-w-on-p-residual}
\includegraphics[width=0.5\linewidth]{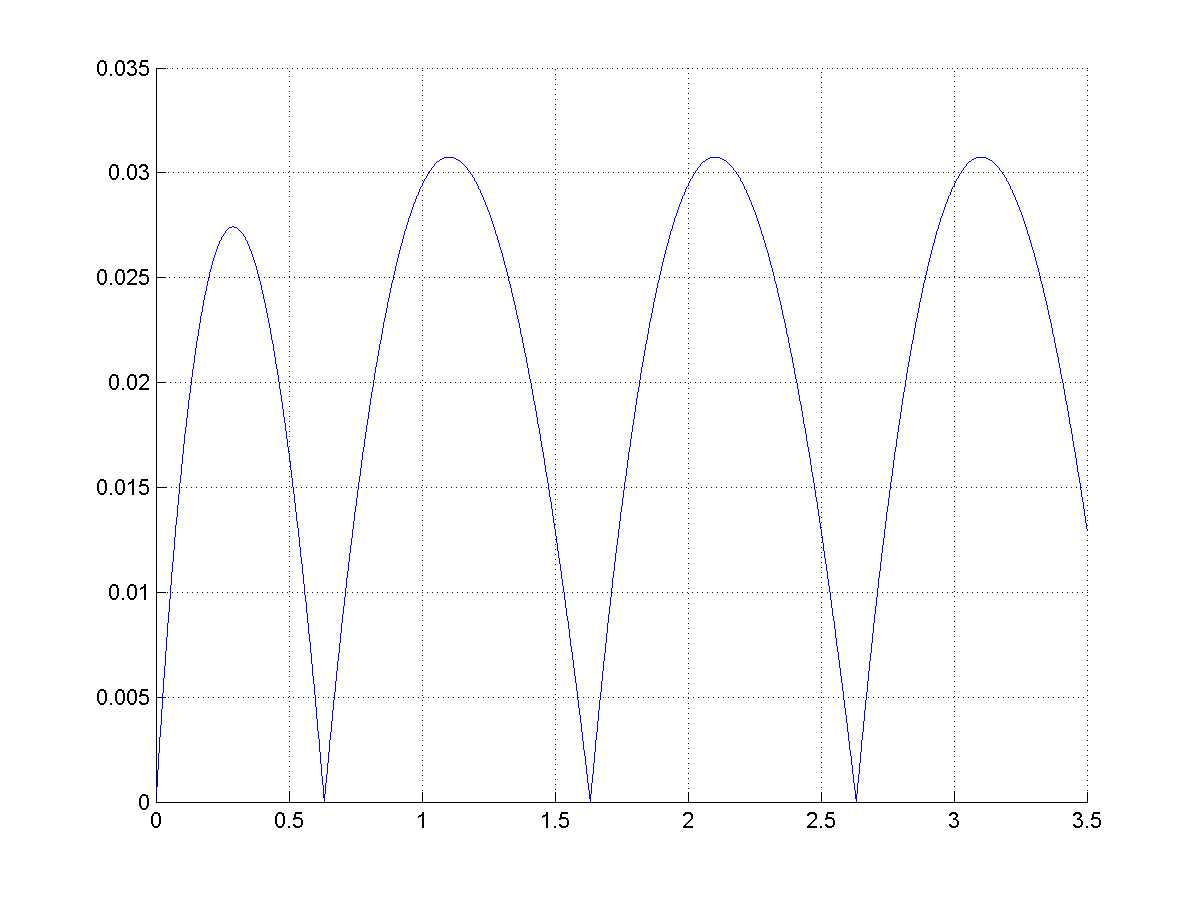}
\end{figure}

\begin{corollary}
The upper bounds \eqref{eq:P_k-and-x_k-alg1-exact} may be simplified as well, using $\ol{w} \leq 1$ and thus $H_0(\frac{\ol{w}}{2}) \leq H_0(\frac{1}{2})= c_1$:
\begin{subequations} \nonumber \label{eq:P_k-and-x_k-alg1} 
\begin{align} 
\|x^{k} - x^*\| & \leq \frac{\mu}{L} ( k_{\max} - k + 2 c_1), & k < k_{\max},
\\ 
\|P(x^k)\| & \leq \frac{\mu^2}{L} \frac{1}{2^{(2^{k-k_{\max}}) - 1}}, & k \geq k_{\max},
\\ 
\|x^{k} - x^*\| & \leq \frac{2\mu}{L}H_{k - k_{\max}}\Big(\frac{1}{2}\Big) \leq \frac{2\mu}{L} \frac{1}{2^{(2^{k - k_{\max}})} - 1}, & k \geq k_{\max}.
\end{align}
\end{subequations}
\end{corollary}

\subsection{Adaptive Newton method}
Presented Algorithm~1 explicitly uses two constants $\mu$ and $L$ but both enter into the algorithm as one parameter $\beta = \frac{\mu^2}{L}$. There is a simple modification allowing adaptively change an estimate of the parameter.

Input of the algorithm is an initial point $x^0$, the approximation $\beta_0$ and the parameter $0 < q < 1$.
\begin{framed} \begin{minipage}{0.8\linewidth}
Algorithm 2 (Adaptive Newton method) 
\begin{enumerate}
\item
Calculate
\begin{equation} \nonumber
\begin{array}{ll}
 & \displaystyle z^k \in \Arg \min_{P'(x^k) z = P(x^k)} \|z\|,\\
 & \alpha_k = \min\big\{1, \frac{\beta_k}{\|P(x^k)\|}\big\}, \\
 & u_{k+1} = \|P(x^k - \alpha_k z^k)\|.
\end{array}
\end{equation}

\item
If either
\begin{equation} \nonumber
\alpha_k < 1 \text{ and } u_{k+1} < u_k - \frac{\beta_k}{2},
\end{equation}
or
\begin{equation} \nonumber
 \alpha_k = 1 \text{ and } u_{k+1} < \frac{1}{2 \beta_k} u_k^2,
 \end{equation} holds, go to Step 4.
Otherwise

\item
apply update rule $\beta_{k} \leftarrow q \beta_k$ and return to Step~1 without increasing the counter.

\item
Take
\[
x^{k+1} = x^k - \alpha_k z^k,
\]
set $\beta_{k+1} = \beta_k$, increase counter $k \leftarrow k+1$, and go to Step~1.
\end{enumerate}
\end{minipage}
\end{framed}

Properties of Algorithm~2 are similar to Algorithm~1. We omit the formal proof of convergence; it follows the lines of the proof of Theorem~\ref{thm:alg1-exact} with respect to the properties:
\begin{itemize}
\item Algorithm~2 runs real steps at Step~4 and some number of fictitious steps resulting in update rule Step~3;
\item
$\beta_k$ is non-increasing sequence;
\item
if $\beta_k < \beta$ (the actual constant of the objective function), then Step~3 won't appear and $\beta_k$ won't decrease anymore. It means that there are at most $\wh{k} = \max\{0, \; \lceil\log_{1/q} (\frac{\beta_0}{\beta})\rceil\}$ check steps. Minimal possible value of $\beta_k$ is $\beta_{\min} = q^{\wh{k}} {\color{red}\beta_0}$, and the number of Stage~1 steps is limited by $\wh{k}_{\max} = \max\{0, \; \left\lceil 2 \frac{\|P(x^0)\|}{\beta_{\min}}\right\rceil - 2\}$ as well;

\item
if Step~4 is performed with $\beta_k > \beta$ due to validity of a condition in Step~2, then $\|P(x^{k+1})\|$ decreases \emph{more} than at the corresponding step with ``optimal'' step-size $\alpha_k = \min\{1, \; \frac{\beta}{\|P(x^k)\|}\}$ (calculated with ``true'' value $\beta$).
\end{itemize}

Let us mention two other versions of adaptive Newton method. The first one uses increasing updates (e.g. $\beta_{k+1} = q_2 \beta_k$ with $q_2 > 1$) in the end of Step~4, thus adapting the constant to current $x^k$. Also other decrease policies can be applied for $\beta_k$ in Step~3.

The alternative to the Algorithm~2
is line-search or Armijo-like rules for choosing step-size $\alpha_k$ to minimize objective function $\|P(x^k - \alpha z^k)\|$ directly. 
It is known that this approach eventually leads to the quadratic convergence rate with pure Newton steps as well, but without any estimates \cite{Burdakov}. 
The difference between {\color{red} is} the following: in the proposed Algorithm~2 parameter $\beta$ is being \emph{monotonically tuned} to the global problem-specific value, 
rather than trial-and-error procedure is performed at \emph{every} iteration in the Armijo-like approach. 
We compare the alternatives in Example~1.

\subsection{Method for $L$ known}
\label{ssec:newton-L-known}
Constant $\mu$, used in Assumptions $\mathbf{B}, \mathbf{B'},$ is rarely accessible. 
As said in the beginning of the section, we can use more accurate upper bound \eqref{eq:ineq1} instead of \eqref{eq:ineq2} for step-size choice. It results in the algorithm, which uses the Lipschitz constant only. 
The optimal step-size in this case is
\begin{equation} \label{eq:alg3-alpha}
\alpha^*_k = \arg\min_{\alpha} \Big(|1-\alpha|\cdot \|P(x^k)\| + \frac{L\alpha^2\|z^k\|^2}{2} \Big) = \min\Big\{1, \frac{\|P(x^k)\|}{L \|z^k\|^2} \Big\}.
\end{equation}
\begin{framed}
   \begin{minipage}{0.8\linewidth}
        Algorithm 3 ($L$-Newton method)
        \begin{equation} \nonumber
        \begin{aligned}
            & z^k \in \Arg \min_{P'(x^k) z = P(x^k)} \|z\|,
        \end{aligned}
        \end{equation}
        \begin{equation} \nonumber \label{eq:x_k-update-algorithm3}
        \begin{aligned}
            & x^{k+1} = x^k - \min\Big\{1, \frac{\|P(x^k)\|}{L \|z^k\|^2} \Big\}z^k, \; k \geq 0.
        \end{aligned}
        \end{equation}
    \end{minipage}
\end{framed}
The algorithm is well-defined, as condition $\|z^k\| = 0$ holds only if $P(x^k) = 0$, i.e. a solution was found at the previous step. Formally we put $z^k = 0, \; \alpha_k = 1$ and $x^{k+1} = x^k$ thereafter.

For the Algorithm we also present similar convergence theorem and set of corollaries.
We emphasize that while constant $\mu$ is still used in the bounds, Algorithm~3 does not depend on it.
\begin{theorem} \label{thm:alg3-exact} Suppose that Assumptions~$\mathbf{A'}, \mathbf{B'}$ hold and
\begin{equation} \label{eq:cond-P_0-alg3}
\|P(x^0)\| \leq \frac{\mu^2}{L} F^{\inv}_2\Big( \frac{L}{\mu} \rho \Big), 
\end{equation} 
where $F^{\inv}_2(\cdot)$ is the inverse function for the continuous strictly increasing function $F_1(w)$ given by
\begin{subequations} \nonumber \label{eq:cond-p-alg3}
\begin{empheq}[left={\displaystyle \!\!\!F_2(w) \!=\!\! \empheqlbrace}]{align}
\label{eq:cond-p-alg3-a}
\displaystyle
& 2 H_0\Big(\frac{w}{2}\Big), \hspace{58mm} 0 \leq w \leq 1, \\
\label{eq:cond-p-alg3-b}
& \frac{(\lceil 2w \rceil \! - \! 2) (4 w \! - \!\lceil 2w \rceil \! +\! 3)}{4} 
\! +\! 2 H_0\left(\frac{1}{2}\! -\! \frac{\lceil 2 w \rceil\! -\! 2w}{4}\right), \;\; w > 1.
\end{empheq}
\end{subequations}

Then the sequence $\{x^k\}$ generated by Algorithm~3 converges to a solution $x^* : P(x^*) = 0$.
The values $\|P(x^k)\|$ are monotonically decreasing, at $k$-th step the following estimates for the rate of convergence hold:
\begin{subequations} \label{eq:P_k-and-x_k-alg3-exact} 
\begin{align} 
\label{eq:P_k-alg3-stage1-exact} 
\|P(x^k)\| & \leq \|P(x^0)\| - \frac{\mu^2}{2L}k, \;\;\; k < k_{\max},
\\ 
\label{eq:x_k-alg3-stage1-exact}
\|x^{k}\! -\! x^*\| & \!\! \leq \!\! \frac{\mu}{L}\! \left(\!\!\frac{(4 w_0\! -\! \lceil 2 w_0\rceil \! +\! 3\! -\! k)(\lceil 2 w_0 \rceil \!-\! 2\! -\! k)}{4} \!+\! 2 H_0\Big(\frac{\ol{w}}{2}\Big)\!\!\right)\!, k < k_{\max},
\\ 
\label{eq:P_k-alg3-stage2-exact}
\|P(x^k)\| & \leq \frac{2\mu^2}{L} \Big(\frac{\ol{w}}{2}\Big)^{(2^{(k-k_{\max})})}, \;\;\; k \geq k_{\max},
\\ 
\label{eq:x_k-alg3-stage2-exact}
\|x^{k} \! - \! x^*\| & \leq \frac{2\mu}{L}H_{k - k_{\max}}\Big(\frac{\ol{w}}{2}\Big), \;\;\; k \geq k_{\max}.
\end{align}
\end{subequations} 
where $\ol{w} = \frac{L}{\mu^2}\|P(x^0)\| -\frac{k_{\max}}{2} = \min\big\{\frac{L}{\mu^2}\|P(x^0)\|,$ $1 - \frac{1}{2}\big\lceil \frac{2L}{\mu^2}\|P(x^0)\|\big\rceil + \frac{L}{\mu^2}\|P(x^0)\| \big\} \in [0, 1)$.
\end{theorem}
For proving the theorem we need the simple proposition about real sequences.

\begin{proposition}\label{comparison} 
Consider two non-negative real sequences $w_k \geq 0, v_k \geq 0, \; k \geq 0$, and functions $h_k(v), f(v), k \geq 0$. Let $f(\cdot)$ be monotonically increasing function, being also a majorant function for $h_k(\cdot)$ with respect to $\{w_k\}$. Namely, we require $h_k(w_k) \leq f(w_k)$.
If $w_0 \leq v_0$ and the sequences satisfy
\[
\begin{array}{c}
w_{k+1} \leq h_k(w_k), \\
v_{k+1} = f(v_k), 
\end{array}
\]
then $w_k \leq v_k, \; k \geq 0$.
\end{proposition}
The proposition is trivially proved by induction step $w_{k+1} \leq h_k(w_k) \leq f(w_k) \leq f(v_k) = v_{k+1}$.

The proof of Theorem~\ref{thm:alg3-exact} resembles the proof of Theorem~\ref{thm:alg1-exact}, and it uses majorization idea. Main issue is due to different step-size, now there is no clear separation between damped and pure Newton steps.
\begin{proof}
We compare two discrete processes, both starting with the same value $v_0 = w_0$.
The first sequence is generated by recurrent equality $v_{k+1} = f(v_k)$, where
\[
f(v) = \min_{\alpha} \Big(|1 - \alpha| v + \frac{\alpha^2}{2} v^2\Big)
= \left\{
\begin{array}{ll}
v - \frac{1}{2}, & v > 1, \\
\frac{1}{2} v^2, & v \leq 1,
\end{array}
\right.
\]
is monotonically increasing function on $v \geq 0$.
The second process is $w_k = \frac{L}{\mu^2}\|P(x^k)\|$, with $\{x^k\}$ generated by Algorithm~3.
Assume that all $x^k \in B$ and thus Assumptions~$\mathbf{A'}, \mathbf{B'}$ hold.
Then due to main inequality \eqref{eq:ineq1} and step-size \eqref{eq:alg3-alpha}
\[
w_{k+1} \leq 
h_k(w_k) = \min_{\alpha} \Big(|1-\alpha| w_k + \frac{\alpha^2}{2} \left(\frac{L \|z^k\|}{\mu} \right)^2\Big).
\]
Here we used $\|z^k\|$ in parametric part $a^2_k \geq 0$ within introduced function $h_k(w) = \min_\alpha (|1-\alpha| w + a_k^2 \frac{\alpha^2}{2})$.
From $a_k = \frac{L}{\mu} \|z^k\| \leq \frac{L}{\mu^2} \|P(x^k)\| = w_k$ by Assumption~$\mathbf{B'}$, the functions $|1-\alpha| w + a^2_k \frac{\alpha^2}{2} $ under minimization in $h_k(\cdot)$ are majorized by corresponding functions $|1-\alpha| v + w_k^2 \frac{\alpha^2}{2} \leq |1-\alpha| v + \frac{\alpha^2}{2} v^2$ for $v \geq w_k$.
Minimums of the functions over $\alpha$ (implicitly dependent on $w_k, v$) also satisfy $h_k(v) \leq f(v)$ whenever $v \geq w_k$. It follows that $h_k(w_k) \leq f(w_k)$.

Therefore sequences $\{w_k\}$ and $\{v_k\}$, alongside with functions $h_k(\cdot), f(\cdot)$ satisfy Proposition~\ref{comparison}, given $v_0 = w_0 = \frac{L}{\mu^2}\|P(x^0)\|$.
Now we have upper bound on $w_k$ through $v_k$ for all $k \geq 0$. Next we closely follow the lines and calculations of the proof of
Theorem~\ref{thm:alg1-exact}.

Analysis of $\{v_k\}$ is the same as analysis of the upper bound in Theorem~\ref{thm:alg1-exact}. First, $v_k$ decreases, and the number of steps until $v_k$ reaches $1$ is the same $k_{\max}$, given by \eqref{eq:k_max-definition}. Explicit expressions on $v_k$ are
\[
\begin{array}{ll}
\displaystyle
v_k = v_0 - \frac{1}{2} k, & \; k \leq k_{\max}, \\[3mm]
\displaystyle
v_k = 2 \Big( \frac{v_{k_{\max}}}{2} \Big)^{(2^{k - k_{\max}})}, & \; k > k_{\max}. \\
\end{array}
\]
where $v_{k_{\max}} = \ol{w} = v_0 - \frac{k_{\max}}{2} = \min\{w_0, 1 - \frac{\lceil 2 w_0 \rceil - 2 w_0}{2}\}$ (remind that $v_0 = w_0$ by definition). These expressions result in the bounds \eqref{eq:P_k-alg3-stage1-exact} and \eqref{eq:P_k-alg3-stage2-exact} on $\|P(x^k)\|$, which are the same as in Theorem~\ref{thm:alg1-exact}.

For the late steps with $k \geq k_{\max}$ we have $w_k \leq v_k \leq 1$, and thus $\alpha^*_k = 1$ (just because $\frac{\|P(x^k)\|}{L \|z^k\|^2} \geq \frac{\mu^2}{L \|P(x^k)\|} = \frac{1}{w_k} \geq 1$).
Then points $x^{k}, k \geq k_{\max}$ form a Cauchy sequence like \eqref{eq:x_a-x_b-alg1-stage2}, and obey
similar to \eqref{eq:x_k-x*-alg1-stage2} bound:
\[
\|x^{k_{\max} + \ell} - x^*\| 
{\color{red} \leq } \sum_{i = \ell}^{\infty} \|z^{\color{red}k_{\max} + i}\| 
\leq \frac{\mu}{L} \sum_{i = \ell}^{\infty} w_{\color{red}k_{\max} + i}
\leq \frac{\mu}{L} \sum_{i = \ell}^{\infty} v_{\color{red}k_{\max} + i} 
= \frac{2 \mu}{L} H_\ell \left(\frac{\ol{w}}{2}\right), \; \ell \geq 0. 
\]
This is \eqref{eq:x_k-alg3-stage2-exact} bound, by the way the same as \eqref{eq:x_k-alg1-stage2-exact} of Theorem~\ref{thm:alg1-exact}.

What is the main difference from Theorem~\ref{thm:alg1-exact} proof, is the distance counting until $k_{\max}$-th step. In this case we assume $k_{\max} = \lceil 2 w_0 \rceil - 2 > 0$. Due to the Algorithm's step-size choice now we cannot be sure, whether $\alpha^*_k$ is always less than $1$ or not. For $i < k_{\max}$ we have $\|x^{i+1} - x^i\| = \alpha^*_i \|z^i\| \leq \|z^i\| \leq \frac{1}{\mu} \|P(x^i)\| = \frac{\mu}{L} w_i \leq \frac{\mu}{L} v_i = \frac{\mu}{L}(v_0 - \frac{1}{2}i)$, and arrive to \eqref{eq:x_k-alg3-stage1-exact}:
\begin{equation} \nonumber
\begin{array}{rl}
\|x^{k} - x^*\| \leq & \displaystyle
\|x^{k_{\max}} - x^*\| + \sum_{i = k}^{k_{\max}-1} \|x^{i+1} - x^i\| \leq \\
\leq & \displaystyle 
2\frac{\mu}{L} H_0\Big(\frac{\ol{w}}{2}\Big) + 
% sum_{i=a}^b (x - i) = (2x - a - b)(b + a - 1)/2
\frac{\mu}{L}\sum_{i=k}^{k_{\max} - 1} \Big(v_0 - \frac{1}{2}i\Big) = \\
= & \displaystyle
\frac{\mu}{L} \left(\frac{(4 v_0 - k_{\max} + 1 - k)(k_{\max} - k)}{4} + 2 H_0\Big(\frac{\ol{w}}{2}\Big) \right) = \\
= & \displaystyle
\frac{\mu}{L} \left(\frac{(4 w_0 - \lceil 2 w_0\rceil + 3 - k)(\lceil 2 w_0 \rceil - 2 - k)}{4} + 2 H_0\Big(\frac{\ol{w}}{2}\Big) \right).
\end{array}
\end{equation}
In \eqref{eq:x_k-alg3-stage1-exact} we used explicit {\color{red}formula} for $\ol{w} = 1 + w_0 - \frac{\lceil 2 w_0 \rceil}{2}$ in case $k_{\max} > 0$.

The last part of the proof is checking assumption $x^k \in B$, i.e. $\|x^0 - x^k\| \leq \rho$. From the {\color{red}derivation of bound \eqref{eq:x_k-alg3-stage1-exact} above} we have
\[
\begin{array}{rl}
\|x^0 - x^k\| \leq & \displaystyle 
{\color{red}\sum_{i=0}^{k-1} \|x^{i+1} - x^i\| 
\leq \sum_{i=0}^{k_{\max}-1} \|x^{i+1} - x^i\| + \sum_{i=k_{\max}}^{\infty} \|x^{i+1} - x^i\|} \leq \\
\leq & \displaystyle
\frac{\mu}{L} \left(\frac{(4 w_0 - \lceil 2 w_0\rceil + 3)(\lceil 2 w_0 \rceil - 2)}{4} + 2 H_0\Big(\frac{\ol{w}}{2}\Big) \right), \; k \geq 0,
\end{array}
\]
in case $k_{\max} > 0$, i.e. $w_0 \geq 1$, and $\|x^{\color{red}0} - x^{\color{red}k}\| \leq 2 \frac{\mu}{L} H_0(\frac{w_0}{2})$ otherwise {\color{red}(from derivation of bound \eqref{eq:x_k-alg3-stage2-exact})}. Thus sufficient condition for $x^k \in B$ is
\[
\rho \geq \frac{\mu}{L} F_2\Big(\frac{L}{\mu^2}\|P(x^0)\|\Big),
\]
which is equivalent to \eqref{eq:cond-P_0-alg3}.
\end{proof}
Like for Algorithm~1, we can state few corollaries: on global convergence and some simple bounds.

\begin{corollary}
If $\rho = \infty$ (that is conditions $\mathbf{A'}, \mathbf{B'}$ hold on the entire space $\R^n$) 
then Algorithm~3 converges to a solution of \eqref{eq:main-nonlinear-P} for any $x^0 \in \R^n$.
\end{corollary}

There is a simpler tight relaxed condition for Theorem~\ref{thm:alg3-exact}. 
\begin{corollary} \label{cor:alg3-simple-bound}
Condition {\color{red}\eqref{eq:cond-P_0-alg3}} can be replaced with 
\begin{equation} \nonumber
\begin{array}{rl}
\|P(x^0)\| \leq & \displaystyle \frac{\mu^2}{L}F_2^{\inv, \lw}\Big( \frac{L}{\mu}\rho \Big) = \\
= & \displaystyle \frac{\mu^2}{L} \times \left\{
\begin{array}{cl}
\displaystyle
\frac{1}{4} - 2 c_3 + \sqrt{\Big(2 c_3 - \frac{1}{4}\Big)^2 + \frac{2L}{\mu}\rho}, 
& \displaystyle \;\; 0 \leq \rho \leq c_3\frac{\mu}{L}, \\[4mm]
\displaystyle
-\frac{1}{4} + \sqrt{\frac{L}{\mu}\rho - c_3 + \frac{9}{16}},
& \displaystyle \;\; \rho > c_3\frac{\mu}{L},
\end{array}
\right.
\end{array}
\end{equation}
where constant $c_3 = 2 c_1 + c_2 - 1 \approx 0.66885$. 
\end{corollary}
The idea of the proof is the same as of Corollary~\ref{cor:alg1-simple-bound}: we derive upper bound $F_2^{\up}(w)$ for function $F_2(w)$ \eqref{eq:cond-p-alg3}. Then the sufficient condition for points $x^k$ generated by Algorithm~3 being inside $B$ is $\rho \geq \frac{\mu}{L} F_2^{\up}(\frac{L}{\mu^2} \|P(x^0)\|)$. Its inverse function $F_2^{\inv, \lw} : F_2^{\inv, \lw}(F_2^{\up}(w)) \equiv w, w \geq 0$ is a lower bound for $F_2^{\inv}(\cdot)$ then.

\begin{proof}
Notice that both components of $F_2(w)$ are the same in $w \in [\frac{1}{2}, 1]$, so
\begin{subequations} \nonumber
\begin{empheq}[left={\displaystyle \!\!\!F_2(w) \!=\!\! \empheqlbrace}]{align}
\nonumber
\displaystyle
& 2 H_0\Big(\frac{w}{2}\Big), \hspace{58mm} 0 \leq w \leq \frac{1}{2}, \\
\nonumber
& \frac{(\lceil 2w \rceil \! - \! 2) (4 w \! - \!\lceil 2w \rceil \! +\! 3)}{4} 
\! +\! 2 H_0\left(\frac{1}{2}\! -\! \frac{\lceil 2 w \rceil\! -\! 2w}{4}\right), \;\; w > \frac{1}{2}.
\end{empheq}
\end{subequations}
Let's begin with the case $w > \frac{1}{2}$. We introduce the auxiliary function $r(w) = \frac{2w - \lceil 2w \rceil}{4} \in (0, \frac{1}{4}]$, related with the fractional part. This function is periodic on $w$, and
\[
\begin{array}{rl}
F_2(w) = & \displaystyle \Big(w + \frac{1}{4}\Big)^2 + 2 H_0\Big(\frac{1}{2} - r(w) \Big) - \Big(\frac{5}{4} - 2 r(w) \Big)^2 \leq \\[3mm]
\leq & \displaystyle \Big(w + \frac{1}{4}\Big)^2 + 2 c_1 + c_2 - \frac{25}{16}, \;\; w > \frac{1}{2}.
\end{array}
\]
Here we used definition \eqref{eq:c_2-define} of constant $c_2$, introduced in Section~\ref{sec:preliminaries}.

By the definition of $H(\cdot)$ one can select terms up to quadratic in $H_0(\delta) = \delta + \delta^2 + H_2(\delta)$, thus $H_2(\frac{1}{4}) = H_0(\frac{1}{4}) - \frac{5}{16} = H_0(\frac{1}{2}) - \frac{1}{2} -\frac{5}{16} = c_1 - \frac{13}{16}$. 
From convexity we have the upper linear bound for $H_2(\delta) \leq (4 c_1 - \frac{13}{4}) \delta, \;\; \delta \in [0, \frac{1}{4}]$, and consequently for $2 H_0(\frac{w}{2})$: 
\[
2 H_0\Big(\frac{w}{2} \Big) =
2\frac{w}{2} + 2 \frac{w^2}{4} + H_2(\frac{2}{2}) \leq
\frac{w^2}{2} + \Big(4 c_1 - \frac{9}{4}\Big) w \leq \frac{w^2}{2} + \Big(4 c_1 + 2 c_2 - \frac{9}{4}\Big) w, \;\; 0 \leq w \leq \frac{1}{2}.
\]
In the last inequality we manually added a small positive linear term $2 c_2 w$ for continuity of the resulting upper bound. 
Combining two parts, we arrive to the increasing continuous upper bound for $F_2(\cdot)$:
\[
F_2(w) \leq F_2^{\up}(w) = \left\{
\begin{array}{cl}
\displaystyle
\frac{w^2}{2} + \Big(4 c_1 + 2 c_2 - \frac{9}{4}\Big) w, 
& \displaystyle
0 \leq w \leq \frac{1}{2}, \\
\displaystyle 
\Big(w + \frac{1}{4}\Big)^2 + 2 c_1 + c_2 - \frac{25}{16}, 
& \displaystyle 
w > \frac{1}{2}.
\end{array}
\right.
\]
Using definition of $c_3 = F_2^{\up}(\frac{1}{2}) = 2 c_1 + c_2 - 1$, inverse of this function is the lower bound for $F_2^{\inv}(\cdot)$
\[
F_2^{\inv}(p) \geq F_2^{\inv,\lw}(p) = 
\left\{
\begin{array}{cl}
\displaystyle
\frac{1}{4} - 2 c_3 + \sqrt{\Big(2 c_3 - \frac{1}{4}\Big)^2 + 2 p}, 
& \displaystyle \;\; 0 \leq p \leq c_3, \\[4mm]
\displaystyle
-\frac{1}{4} + \sqrt{p - c_3 + \frac{9}{16}},
& \displaystyle \;\; p > c_3.
\end{array}
\right.
\]
\end{proof}

The presented bounds are sharp and quite exact. Visually paired graphics of $F_2(w)$ and $F_2^{\up}(w)$, $F_2^{\inv}(p)$ and $F_2^{\inv, \lw}(p)$ looks the same, and its residuals are plotted on Figure~\ref{fig:alg3-w-on-p-residual}.

\begin{figure} 
\caption{Algorithm 3: residual of upper bound $F_2^{\up}(w) - F_2(w)$ (left) and residual of lower bound $F_2^{\inv}(p) - F_2^{\inv, \lw}(p)$ (right).}
\label{fig:alg3-w-on-p-bounds}
\includegraphics[width=0.5\linewidth]{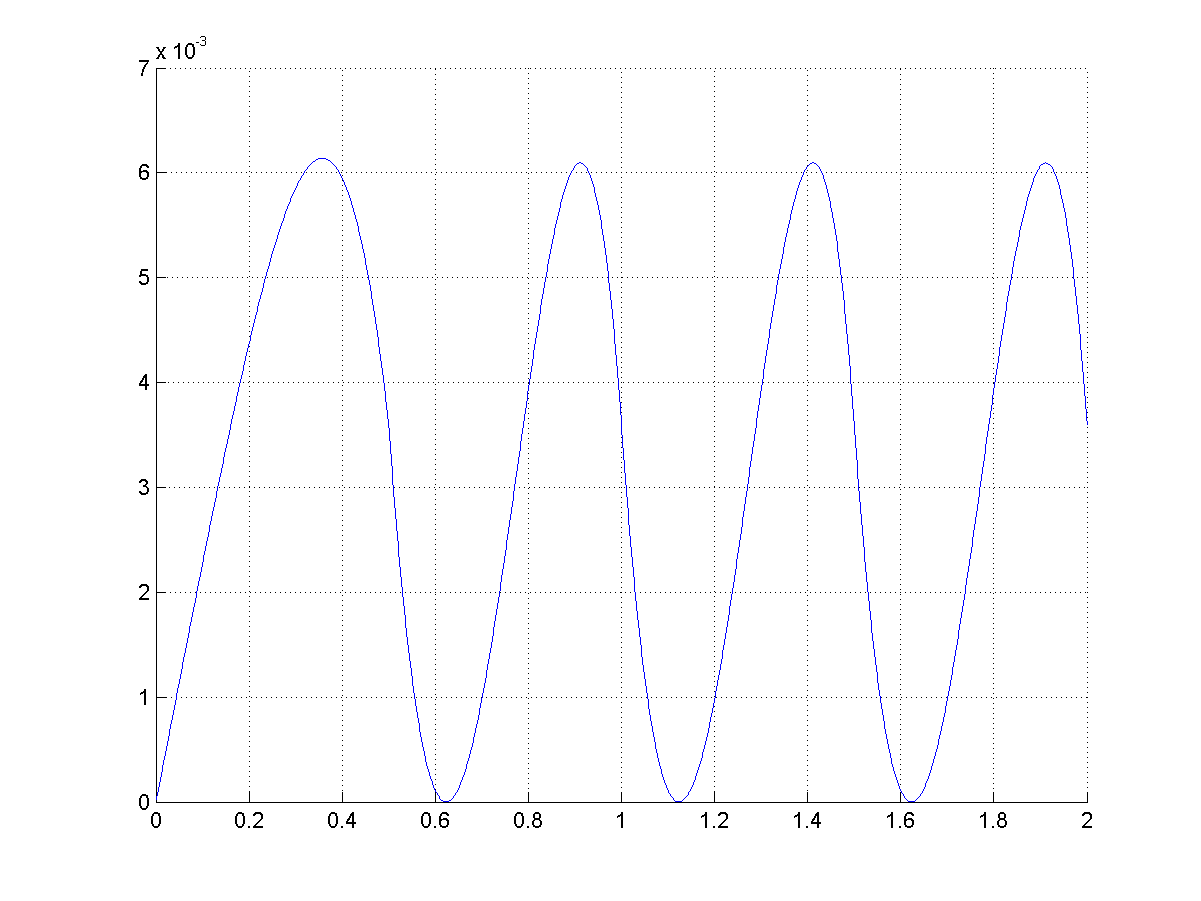}
\includegraphics[width=0.5\linewidth]{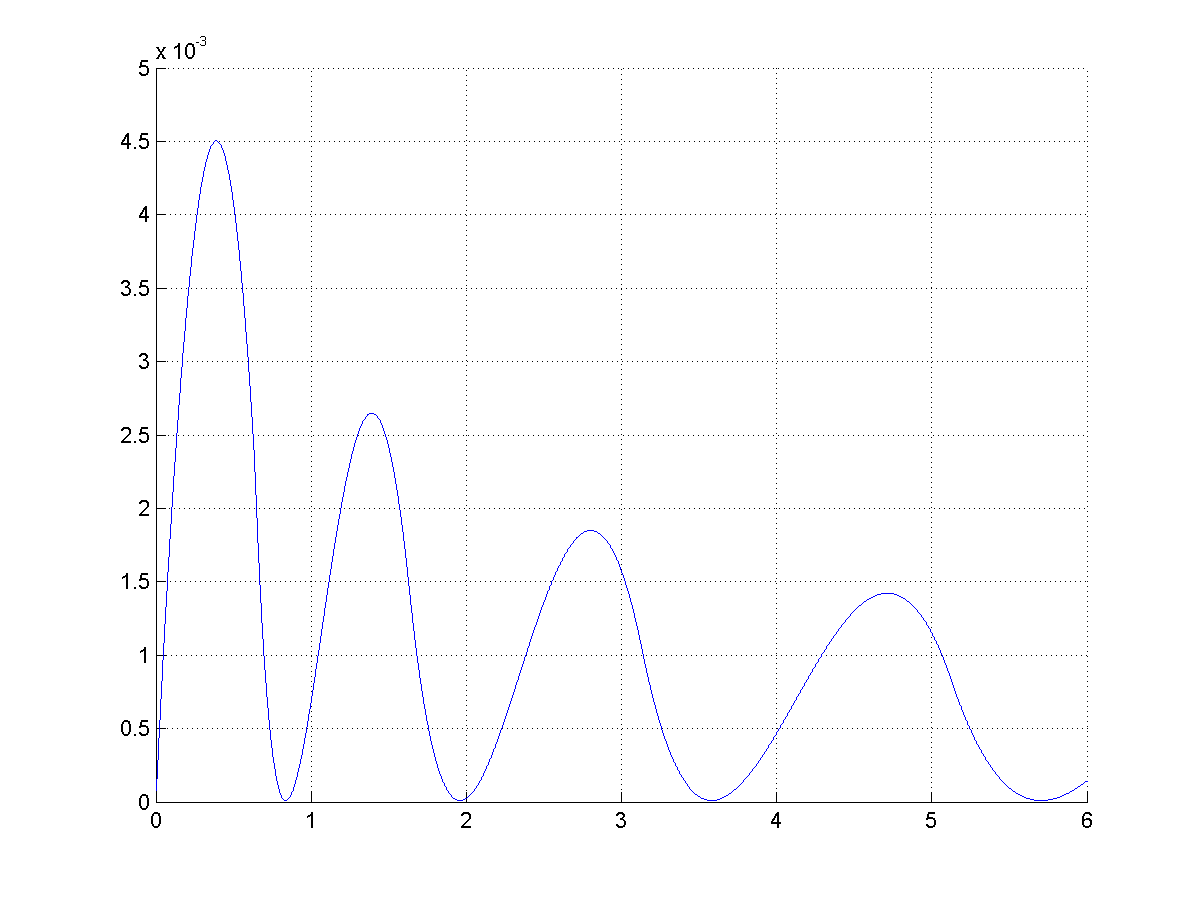}
\label{fig:alg3-w-on-p-residual}
\end{figure}

Surprisingly enough, in practice the Algorithm~3 (and its adaptive modification) sometimes converges faster than Algorithm~1, possibly because direction-wise (along $z^k$) Lipschitz constant is less or equal than uniform Lipschitz constant of Assumption~$\mathbf{A'}$, and the convergence rate can be better.

The idea of adaptive algorithm with estimates $L_k$ works as well for Algorithm~3; including its modifications with increasing $L_k$.

\subsection{Pure Newton method}
For comparison let us specify convergence conditions of pure Newton method ($\alpha_k = 1$).
\begin{theorem}
Let conditions $\mathbf{A'}, \mathbf{B'}$ hold. If $\delta = \frac{L}{2\mu^2}\|P(x^0)\| < 1$ and $\frac{2\mu}{L}H_0(\delta) \leq \rho$, then pure Newton method converges to a solution $x^*$ of \eqref{eq:main-nonlinear-P}, and
\[
\|P(x^k)\| \leq \frac{2\mu^2}{L} \delta^{(2^k)}, \;\;
\|x^k - x^*\| \leq \frac{2\mu}{L} H_k(\delta).
\]
\end{theorem}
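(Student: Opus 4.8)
The plan is to specialize the master recurrence \eqref{eq:ineq2} to the pure Newton case $\alpha_k \equiv 1$, where the damping term $|1-\alpha_k|u_k$ vanishes identically. Writing $u_k = \|P(x^k)\|$ and setting $\beta = \frac{\mu^2}{L}$ as in the proof of Theorem~\ref{thm:alg1}, the recurrence collapses to the clean quadratic relation $u_{k+1} \leq \frac{1}{2\beta}u_k^2$. The natural normalization is $v_k = \frac{u_k}{2\beta}$, under which $\delta = v_0 = \frac{L}{2\mu^2}\|P(x^0)\|$ and the recurrence becomes $v_{k+1} \leq v_k^2$. A one-line induction then gives $v_k \leq \delta^{(2^k)}$, which is exactly the first claimed bound $\|P(x^k)\| \leq \frac{2\mu^2}{L}\delta^{(2^k)}$. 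The hypothesis $\delta < 1$ is precisely what converts this into double-exponential decay rather than divergence.

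For the distance estimates I would bound each single step via Lemma~\ref{lem:solution-of-linear-eq}: since $\alpha_k = 1$ we have $\|x^{k+1}-x^k\| = \|z^k\| \leq \frac{u_k}{\mu} \leq \frac{2\mu}{L}\delta^{(2^k)}$. Telescoping and summing the double-exponential tail yields, for every $s \geq 0$,
\beq \nonumber
\|x^{k+s}-x^k\| \leq \sum_{i=k}^{k+s-1}\|x^{i+1}-x^i\| \leq \frac{2\mu}{L}\sum_{i=k}^{k+s-1}\delta^{(2^i)} \leq \frac{2\mu}{L}H_k(\delta).
\eeq
Because $\delta < 1$ forces $H_k(\delta) \to 0$ as $k \to \infty$, the sequence $\{x^k\}$ is Cauchy and converges to some $x^*$; passing to the limit $s \to \infty$ in the telescoped bound gives the second claimed estimate $\|x^k - x^*\| \leq \frac{2\mu}{L}H_k(\delta)$. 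Since $u_k \to 0$ and $P$ is continuous, $P(x^*) = 0$.

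The step demanding the most care --- and the one that consumes the hypothesis $\frac{2\mu}{L}H_0(\delta) \leq \rho$ --- is verifying that \emph{all} iterates remain in the ball $B$, since Assumptions~$\mathbf{A''},\mathbf{B''}$ (and hence both existence and the norm bound on $z^k$) hold only there. I would set this up as a simultaneous induction: assuming $x^0,\ldots,x^k \in B$ validates the existence and bound on $z^k$, whence $\|x^{k+1}-x^0\| \leq \sum_{i=0}^{k}\|x^{i+1}-x^i\| \leq \frac{2\mu}{L}H_0(\delta) \leq \rho$ places $x^{k+1}\in B$ as well. The $\rho$-condition is calibrated exactly so that the total trajectory displacement never escapes $B$. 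This bootstrapping is the crux; everything else is routine and mirrors the Stage~2 analysis in the proof of Theorem~\ref{thm:alg1} under the identification $\ol{k} = 0$ and $\delta = \frac{u_{\ol{k}}}{2\beta}$, i.e.\ pure Newton is precisely the case that begins directly in Stage~2.
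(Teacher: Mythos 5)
Your proposal is correct and is exactly the argument the paper intends: the paper omits a written proof of this theorem (citing Corollary~1 of \cite{Polyak-1964}), but the intended derivation is precisely the Stage~2 analysis in the proof of Theorem~\ref{thm:alg1} with $\ol{k}=0$ and $\frac{u_{\ol{k}}}{2\beta}=\delta$, which is what you reproduce, including the correct use of the hypothesis $\frac{2\mu}{L}H_0(\delta)\leq\rho$ to keep the iterates in $B$ via the bootstrapping induction.
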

It coincides with Corollary~1 of \cite{Polyak-1964}, proven in the Banach space setup (a misprint in \cite{Polyak-1964} is corrected here). For $m = n$ case the result is a minor extension of Mysovskikh's theorem \cite{Kant82}.

\section{Special Cases}
\label{sec:special-cases}

In the section we outline few important cases in more detail, namely solving equations with special structure, solving scalar equations or inequalities, solvability of quadratic equations.

\subsection{Structured problems}

The problem is to solve equation $g(x)=y$ where $g(x)_i=\varphi(c_i^T x), \; c_i\in R^n, i=1,\dots m$. Here $\varphi (t)$ is a twice differentiable scalar function, 
\[ |\varphi'(t)| \geq \mu_\varphi> 0, \;\; |\varphi''(t)|\leq L_\varphi, \;\; \forall t.\] 

It is not hard to see that Assumptions $\mathbf{A}, \mathbf{B}$ hold on the entire space $\R^n$ and Algorithm~1 converges, with Theorem~\ref{thm:alg1-exact} and Corollary~\ref{cor:alg1-simple-bound} providing rate of convergence. The rate of convergence depends on estimates for $\mu, L$, which can be written as functions of $\mu_\varphi, L_\varphi$ and minimal and maximal singular values $\sigma_{\min}, \sigma_{\max}$ of matrix $C$ with columns $c_i$ (we suppose that $C$ has full rank, thus $\sigma_{\min}>0$). Indeed after simple calculations {\color{red}(see expression for $g'(x)$ with $C$ below)} we get
\begin{equation}\label{struct}
L\leq {\color{red}\sigma_{\max}^2} L_\varphi, \quad \mu\geq \sigma_{\min} \mu_\varphi.
\end{equation}

However the special structure of the problem allows to get much sharper results.
{\color{red}
Let's use notation $P(x) = g(x) - y$. Indeed $P'(x) = g'(x) = D(x)C^T,\; D(x)= \diag (\varphi'(c_i^T x))$ and repeating the proof of Theorem 3.2 we get the equality
\[
P(x^{k+1}) = |1 - \alpha| P(x^k) - \alpha \int_{0}^{1}(D_t -D)C^T z^kdt, \; \alpha \geq 0,
\]
where $D_t = D(x^k - \alpha tz^k), D = D(x^k)$.
Thus (recall $u_k = \|P(x^k)\|$)
\[
u_{k+1} 
\leq |1 - \alpha| u_k + \alpha  ||C^T z^k|| \int_{0}^{1}||D_t - D||d t 
\leq |1 - \alpha| u_k + \frac{L_\varphi \alpha^2 ||C^T z^k||^2}{2}
\]
Identity between spectral norm of a diagonal matrix and Euclidean norm 
of vector on the diagonal is used in the last line, 
followed by element-wise Lipschitz property of $\varphi'(\cdot)$:
$\|D_t - D\| = \|[\varphi'(c_i^T (x^{k} - \alpha tz^k)) - \varphi'(c_i^T x^k)]\|$
$\leq \|[ \big|\varphi'(c_i^T (x^{k} - \alpha tz^k)) - \varphi'(c_i^T x^k)\big| ]\|$
$\leq \|[ L_\varphi \alpha t \big| c_i^T z^{k}\big| ]\| = L_\varphi \alpha t \|[ c_i^T z^{k} ]\| = L_\varphi \alpha t \|C^T z^k\|$ for $t \geq 0$.
Thus $\int_{0}^{1}||D_t - D||d t \leq \frac{1}{2}L_\varphi \alpha \|C^T z^k\|$.

But $P'(x^k) z^k=P(x^k)$, thus $D C^T z^k = P(x^k),\; C^T z^k = D^{-1} P(x^k)$ and hence 
$||C^T z^k|| \le \frac{u_k}{\mu_\varphi}$. 
We arrive to the inequality, very similar to \eqref{eq:ineq2}, 
but with different constant $\gamma$
\[ 
u_{k+1} \leq |1 - \alpha| u_k + \gamma\frac{\alpha^2 u_k^2}{2}, \;\; 
\gamma = \frac{L_\varphi}{\mu_\varphi^2}.
\]
Hence $u_{k+1} \leq u_k - \frac{1}{2\gamma}$ at Stage 1, thus this inequality does not depend on $C$! 
As the result we get estimates for the rate of convergence 
which are the same for ill-conditioned and well-conditioned matrices $C$. 
Of course this estimate is much better than standard one 
with $\gamma= \big(\frac{\sigma_{\max}}{\sigma_{\min}} \big)^{\!2} \frac{L_\varphi}{ \mu_\varphi^2}$ 
which follows from \eqref{struct}.
}

This example is just an illustrating one (explicit solution of the problem can be found easily), but it emphasizes the role of special structure in equations to solve. Numerical experiments with such problems are provided below, in Section 6.3.

\subsection{One-dimensional case}

Suppose we solve one equation with $n$ variables: \[f(x)=0, \;\; f: \R^n \rightarrow \R.\]

Here $0$ is \emph{not a minimal value} of $f$, thus it is not a minimization problem! Nevertheless our algorithms will remind some minimization methods. This case has some specific features compared with arbitrary $m$. For instance calculation of $z^k$ may be done
explicitly. Norm in image space is absolute value $|\cdot|$, and $\ell_p$ norms in pre-image space $\R^n, \; p \in \{1, 2,
\infty\}$ can be considered.
Then 
\[
\begin{array}{ll}
\displaystyle
z^k = \frac{f(x^k) \sign(\nabla f(x^k)_i)}{\|\nabla f(x^k)\|_\infty} e^i, \; i \in \Arg \max_i |\nabla f(x^k)_i|, & \text{ in case of } \ell_1\text{-norm},\\
\displaystyle
z^k = \frac{f(x^k)}{\|\nabla f(x^k)\|^2_2} \nabla f(x^k), & \text{ in case of Euclidean norm},\\
\displaystyle
z^k = \frac{f(x^k)}{\|\nabla f(x^k)\|_1} \sign(\nabla f(x^k)), & \text{ in case of } \ell_\infty\text{-norm},
\end{array}
\]
where $e^j = (0, \ldots, 0, 1, 0, \ldots, 0)^T$ is $j$-th orth vector, and $\sign(\cdot)$ function is coordinate-wise sign function, $\sign : \R^n \rightarrow \{-1, 1\}^n$.

Constant $\mu$ (and $\mu_0$) are also calculated explicitly via conjugate (dual) vector norm $\mu = \min_{x \in B} \|\nabla f(x)\|_*$, $\mu_0 = \|\nabla f(x^0)\|_*$. For any norms $\|z^k\|=|f(x^k)| / \|\nabla f(x^k)\|_*$, and in Algorithm~3 damped Newton step is performed iff $\|\nabla f(x^k)\|^2_* < L |f(x^k)|$, otherwise pure Newton step is made.

If we choose $\ell_1$ norm, the method becomes coordinate-wise one. Thus, if we start with $x^0=0$ and perform few steps (e.g. we are in the domain of attraction of pure Newton algorithm) we arrive to a \emph{sparse} solution of the equation.

In Euclidean case a Stage 1 step (damped Newton) of Algorithm~3 is 
\[ x^{k+1} = x^k - \frac{1}{L} \sign(f(x^k)) \nabla f(x^k), \] 
which is exactly gradient minimization step for function $|f(x^k)|$. Stage~2 (pure Newton) step is 
\[ x^{k+1} = x^k - \frac{f(x^k) }{\|\nabla f(x^k)\|^2_2} \nabla f(x^k).\]
This reminds well-known subgradient method for minimization of convex functions. However in our case we do not assume any convexity properties, and the direction may be either gradient or anti-gradient in contrast with minimization methods!

\subsection{Quadratic equations} \label{ssec:quadratic}

Proceed to a specific nonlinear equation, namely the quadratic one. Then the function $g(x)$ may be written componentwise as \eqref{eq:quadratic-componentwise},
with gradients \[ \nabla g_i(x) = A_i x + b_i \in \R^n, \;\; i = 1,\ldots, m. \] Obviously $g(0)=0$, the question is solvability of $g(x)=y$. There are some results on construction of the entire set of feasible points $Y=\{y: g(x)=y \}=g(\R^n)$, including its convexity, see e.g. \cite{Polyak1998}. We focus on local solvability, trying to derive the largest ball inscribed in $Y$.

The derivative matrix $g'(x)$ is formed row-wise as
\begin{equation} \nonumber \label{eq:quadratic-grad-g}
g'(x) = \left[
\begin{array}{c}
\nabla g_1(x)^T \\
\vdots \\
\nabla g_m(x)^T \\
\end{array}
\right]
= \left[
\begin{array}{c}
x^T A_1 + b_1^T \\
\vdots \\
x^T A_m + b_m^T \\
\end{array}
\right] \in \R^{m \times n}.
\end{equation}

One has $g'(0)=H, \; H$ being $m\times n$ matrix with rows $b_i$. We suppose $H$ has rank $m$ (recall that $m\leq n$), then its smallest singular value $\sigma_{\min}(H) > 0$ serves as $\mu_0$. 

The derivative $g'(x)$ is linear on $x$, thus it has uniform Lipschitz constant $L$ on $\R^n$, and assumption $\mathbf{A}$ holds everywhere. There are several estimates for the Lipschitz constants, for example (for $\ell_2$ norm)

\[ L\le L_1 = \sqrt{\lambda_{\max}\left(\sum_{i = 1}^m A_i^T A_i\right)} \] from \cite{Polyak-2001}, where $\lambda_{\max}$ is the maximal eigenvalue of a matrix.
Other estimates can be obtained via elaborate convex semidefinite optimization problem (SDP), cf. \cite{Xia-2014} for details. 

Quadratic equations play significant role in power system analysis, because power flow equations are
quadratic, see \cite{Bialek}. It is of interest to compare our estimates with some known results on solvability of power flow equations \cite{Turitsyn}.

\subsection{Solving systems of inequalities}

Below we address some tricks to convert systems of inequalities into systems of equations.

First, if one seeks a solution of a system of inequalities \[ g_i(x) \leq 0, \; i = 1,\ldots, m, \;\; x \in \R^\ell, \]  then by introducing slack variables the problem is reduced to solution of the under-determined system of equations
 \[ g_i(x) + x_{\ell + i}^2 = 0, \; i = 1,\ldots, m, \;\; x \in \R^{n}, n=\ell+m. \]

Similarly finding a feasible point for linear inequalities $x \geq 0, A x = b, \;\; x \in \R^n, \; b \in \R^m$ can be transformed to the under-determined system 
\[ \sum_{j = 1}^n A_{ij} z_i^2 = b_i, \; i = 1,\ldots, m, \;\; z \in \R^{n }. \]

The efficiency of such reductions is unclear a priori and should be checked by intensive numerical study.

\section{Numerical tests}
\label{sec:numerical}

\newcommand{\ExpOneConstNFunctions}{100}
\newcommand{\ExpOneConstNInitPoints}{1000}

\newcommand{\ExpOneConstInitBeta}{100}
\newcommand{\ExpOneConstLambda}{0.95}
\newcommand{\ExpOneConstC}{0.8}
\newcommand{\ExpOneConstFTol}{10^{-8}}
\newcommand{\ExpOneConstGammaTol}{10^{-13}}
\newcommand{\ExpOneConstNMaxSteps}{10000}

We have performed several experiments to check effectiveness of the proposed approach for solving equations \eqref{eq:main-nonlinear-P} and to compare it with known ones.

The first two experiments relate to the classical case $n=m$, i.e. the number of variables equals the number of equations. Algorithm~2 with adaptive parameter estimation is compared
with well-known ``backstepping'' Armijo-like techniques for the damped Newton method.
Namely, the competitor is the step-size proposed in \cite{Burdakov}.
\begin{equation} \label{eq:armijo-stepsize}
\gamma_k = q^{j} : \|P(x^k + q^j z^k)\| \leq (1 - c q^j )\|P(x^k)\|
\end{equation}
with some shrinkage parameter $q \in (0, 1)$ and relaxation parameter $c \in (0, 1)$.
This method in some sense is similar to our Algorithm~2, but there are differences in step-size choice.

Two other examples relate to under-determined case, i.e. $n>m$. Example 3 is the illustration how to employ structure of the data as explained in Subsection 5.1. We show that such approach strongly accelerates convergence. Final Example 4 is an optimal control problem. Exploiting $L_1$ norms we construct \textit{sparse} controls for the minimal-fuel problem.

\subsection{Example 1}

We studied the Fletcher-Powell system of equations \cite{Fletcher-Powell}:
\begin{equation} \label{eq:fletcher-powell}
\sum_{j=1}^n A_{i j} \sin x_j + B_{i j} \cos x_j = E_i, \; i = 1, 2,...,n
\end{equation}
for various dimensions $n$. The data are generated as proposed in original paper \cite{Fletcher-Powell}:
matrices $A, B \in \R^{n\times n}$ are random, then for some $x^* \in \R^n$ right-hand sides $E \in \R^n$ are calculated. The arising system of equations may have many solutions, however we have the guarantee that it is solvable.
Then a set of 1000 initial points $x^0$ were randomly sampled (multistart policy). 
From each of the initial points, Newton algorithms were run with a) Armijo-kind step-size \eqref{eq:armijo-stepsize}, and b) $\beta$-adaptive algorithm (Algorithm 2). 
Parameters of the algorithms were chosen as $\beta_0 = \ExpOneConstInitBeta, q = \ExpOneConstLambda, c = \ExpOneConstC$.

Each run has a ``success'' or ``fail'' result. ``Success'' means that accuracy $||P(x^k)||<10^{-8}$ is achieved, while ``failure'' is marked when either step-size threshold is $\gamma_k<10^{-13}$ attained or maximal number of iterations ($N=10000$) is performed.
For each dimension $n$ data of algorithms' outcomes were aggregated as following.
For a random sample of equations (there were $100$ for each $n$) ``success ratio'' $r$ was calculated among $\ExpOneConstNInitPoints$ initial points as ratio of success runs and total number of runs: $N_{success}/\ExpOneConstNInitPoints$, both for Armijo-like approach ($r_{Armijo}$), and for our Algorithm~2 ($r_{Alg.~2}$).
To emphasize comparison, we used ratio of ratios $r_{Alg.~2}/r_{Armijo}$ as indicator. 
Then $r_{Alg.~2}/r_{Armijo}$ values were imaged as the box-and-whisker plot for all dimensions (Figure~\ref{fig:exp-1-ratios-whisker}). The middle line in a (quartile) box is the median, whiskers' lengths are set to $0.05$ and $0.95$ percentiles, outlier data is dot-plotted -- as soon there are $100$ points (for each of the samples), there are exactly 5 upper and 5 lower outliers).

\begin{figure}[!h]
\caption{Box-and-whisker plot for the ratios of success ratios $r_{Alg.~2}/r_{Armijo}$ for all dimensions (Ex. 1).}
\includegraphics[width=14cm]{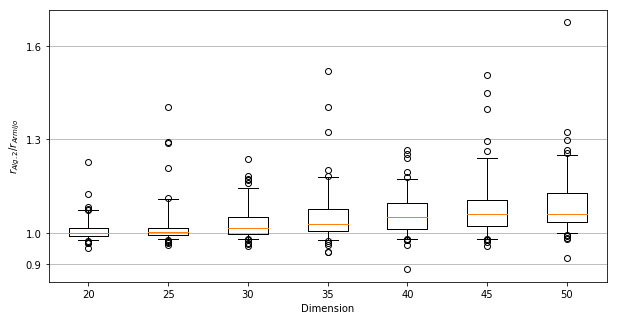}
\label{fig:exp-1-ratios-whisker}
\end{figure}
We see that with high probability the ratio is larger than 1 for large dimensions.
As a conclusion, our method finds a solution more often than Armijo-like approach. 

The similar analysis was done for the function calls. For each of the samples the numbers of function calls 
(these can be many in one Newton step) are averaged over initial conditions for all runs, resulting in values $N_{Alg.~2}, N_{Armijo}$. The ratios of the averaged function calls for Armijo-like step-size \eqref{eq:armijo-stepsize} and Algorithm~2 ($N_{Armijo}/N_{Alg.~2}$) were aggregated on the box-and-whisker plot on Figure~\ref{fig:exp-1-fcalls-whisker}. Typically Algorithm~2 admits much less functions evaluations compared with Armijo-like algorithm. 

\begin{figure}[!h]
\caption{Box-and-whisker plot for the ratios of function calls $N_{Armijo}/N_{Alg.~2}$ for all dimensions (Ex. 1).}
\includegraphics[width=14cm]{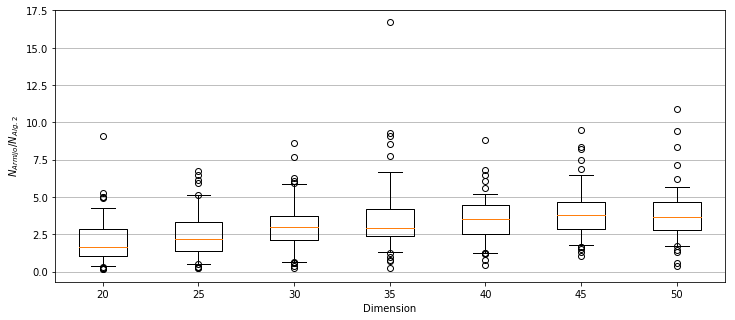}
\label{fig:exp-1-fcalls-whisker}
\end{figure}

\subsection{Example 2}

\newcommand{\ExpTwoConstNFunctions}{100}
\newcommand{\ExpTwoConstNInitPoints}{1000}

\newcommand{\ExpTwoConstInitBeta}{100}
\newcommand{\ExpTwoConstLambda}{0.85}
\newcommand{\ExpTwoConstC}{0.8}
\newcommand{\ExpTwoConstFTol}{10^{-8}}
\newcommand{\ExpTwoConstGammaTol}{10^{-13}}
\newcommand{\ExpTwoConstNMaxSteps}{1000}
The original problem is equality-constrained optimization:
$$\min_{x : h(x) = 0} f(x)$$ 
with scalar differentiable functions $f, h$, and $x \in R^n$.
By use of Lagrange multiplier rule it is reduced to the solution of equations 
$$
P(X) = 
\begin{bmatrix}
\nabla f(x) + \nu \nabla h(x) \\
h(x)
\end{bmatrix}
 = 0
$$
with new variable $X = [x^T, \nu]^T \in R^{n+1}$. Note that we are interested at any solution of these equations, that is we do not distinguish minimum points and stationary points.
The derivative is a block matrix
$$
P'(X) = 
\begin{bmatrix}
\nabla^2 f(x) + \nu \nabla^2 h(x) & \nabla h(x) \\
(\nabla h(x))^T & 0
\end{bmatrix}
$$

We address the simplest case: minimization of a quadratic function (with symmetric~$A$) on Euclidean unit sphere:
$$\min_{||x||^2 = 1} \frac{1}{2} (A x, x) + (b, x)$$
Let the constraint be defined by $h(x) = \frac{1}{2}(x, x) - \frac{1}{2}$, then
\begin{equation} \label{eq:lagrangian-stationary}
P(X) = 
\begin{bmatrix}
A x + b + \nu x \\
\frac{1}{2} x^T x - \frac{1}{2}
\end{bmatrix}, \;\;
P'(X) = 
\begin{bmatrix}
A + \nu I_n & x \\
x^T & 0
\end{bmatrix}.
\end{equation}

The experiment were run for different dimensions ($n=20,25,30,35,40,45,50$).
For each dimension $\ExpTwoConstNFunctions$ problems were randomly generated,
and $\ExpTwoConstNInitPoints$ initial points were randomly chosen for each problem.
Then Algorithm~2 and Armijo-like step-size algorithm \eqref{eq:armijo-stepsize} were run as in the first example. The parameters and stopping criteria were the same, except for the parameter $q = \ExpTwoConstLambda$, and number of Newton steps were bounded by $\ExpTwoConstNMaxSteps$.
Matrix $A$ of the quadratic objective function is a positive {\color{red}semi}definite matrix, formed as $A = \frac{1}{2} M M^T.$
The auxiliary matrix $M \in \R^{n \times n+4}$ has coefficients, uniformly distributed on $[-0.9, 2.1]$.
Coefficients of the linear term $b_i$ are picked up from the scaled Gaussian variables: $b_i \sim 0.1 \mathcal{N}(0, 1)$.
The initial conditions were chosen for the extended variable $X = [x^T, \nu]^T$ as following:
\begin{itemize}
\item 
the first $n$ components ($x^0$, corresponding to the original variable $x$) are sampled from the uniform distribution on Euclidean sphere with radius $1$, 
\item 
the last, $n+1$ component (Lagrange multiplier $\nu$) is chosen as the ``best approximation'', i.e. as the min{\color{red}i}mizer of the residual $\|P(X)\|^2 = \|A x^0 + b + \nu x^0\|^2$. The explicit solution depends on the first $n$ components ($x^0$),
$$
\nu_0 = - (x^0)^T A x^0 - (x^0)^T b.
$$
\end{itemize} 

Same as in Example 1, success ratios $r_{Armijo}, r_{Alg.~2}$ and number of function calls $N_{Armijo}, N_{Alg.~2}$ were calculated (averaged over initial points). On Figure~\ref{fig:exp-2-ratios-whisker} the ratios of $r_{Alg.~2} / r_{Armijo}$, and on Figure~\ref{fig:exp-2-fcalls-whisker} the ratios $N_{Armijo}/N_{Alg.~2}$ were gathered in box-and-whisker plot.

\begin{figure}[!h]
\caption{Box-and-whisker plot for the ratios of success ratios $r_{Alg.~2}/r_{Armijo}$ for all dimensions (Ex. 2).}
\includegraphics[width=14cm]{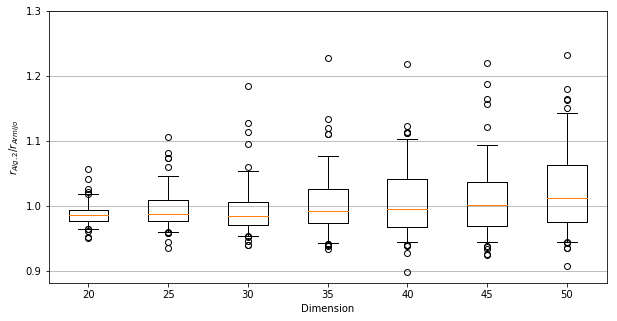}
\label{fig:exp-2-ratios-whisker}
\end{figure}

\begin{figure}[!h]
\caption{Box-and-whisker plot for the ratios of function calls $N_{Armijo}/N_{Alg.~2}$ for all dimensions (Ex. 2).}
\includegraphics[width=14cm]{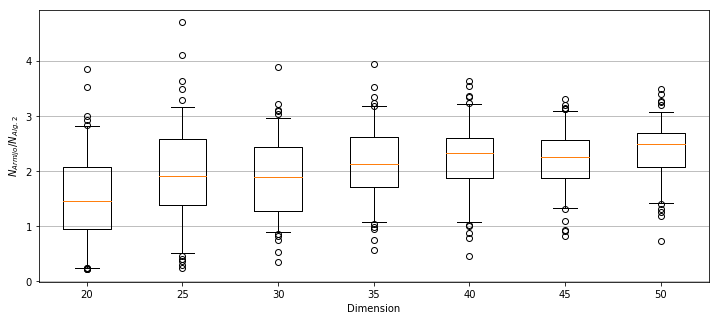}
\label{fig:exp-2-fcalls-whisker}
\end{figure}

Here it appears, that the success ration of Algorithm~2 prevails over Armijo-like approach only at dimension $50$ (still the ratio is about $1$, meaning that both algorithms behave quite similar); however, the average of function evaluations of our algorithm still lower for all cases.

\subsection{Example 3}
The problem is described in Section 5.1; it is to solve $g(x)=y$ where $g(x)_i=\varphi(c_i^T x), \; c_i\in R^n, i=1,\dots m$. Here $\varphi (t)$ is twice differentiable scalar function, \[ |\varphi'(t)| \geq \mu_\varphi> 0, \;\; |\varphi''(t)|\leq L_\varphi, \;\; \forall t. \]

It has been explained in Section 5.1 that the special structure of the problem allows to get much sharper results.

Here we restrict ourselves with the single example to demonstrate how the methods work for medium-size problems ($n=40, m=21$). The equations have special structure as in Section 5.1:

\[ P_i(x) = \varphi(c_i^T x - b_i) - y_i, \;\; x \in \R^n, \;y \in \R^m,\] where 
\[ 
\varphi(t) = \frac{t}{1 + e^{-|t|}}, \;\; \varphi'(t) = \frac{1 + (1 + |t|)e^{-|t|} }{(1 + e^{-|t|})^2}. 
\]

Matrix $C$ with rows $c_i$, vectors $b, y$ were generated randomly. For function $\varphi(t)$ we have $\mu_{\varphi}=\max_t \varphi'(t)\ge 0.5, \; L_\varphi= \max_t|\varphi''(t)|\le 2$ for all $t$. 
Thus if we do not pay attention to the special structure of the problem we have $\mu \geq 0.5 \sigma_{\min}(C), \;L \leq 2 \sigma_{\max}(C)$. On the other hand if we take into account the structure we can replace $\frac{\mu^2}{L}$ ($= 0.0012$ in the example) in Algorithm~1 (see Subsection~\ref{ssec:newton-L-known}) with $\frac{\mu_{\varphi}^2}{L_\varphi} = 0.125$. 

The results of simulations are as follows. When we apply Algorithm 1 with values $L, \mu$, it requires 6000 iterations to achieve the accuracy $||P(x^k)||<10^{-12}$, while the same algorithm with $L_{\varphi}, \mu_{\varphi}$ requires just 70 iterations. The similar result holds for Algorithm 3: the version exploiting $L$ requires 30 iterations, exploiting $ {L_\varphi}$ --- just 5 iterations. All algorithms which are not based on information on these constants (pure Newton, adaptive Algorit{\color{red}h}m 2) also converge in 5 iterations.

These results demonstrate how sensitive can be the proposed algorithms to {\color{red}a priori} data and to the special structure of equations.

\subsection{Example 4. }
This test is devoted to the underdetermined systems of equations and, specifically, sparsity property, arising in optimal control problems.
The behavior of a pendulum with force control $u$ is given by the second-order differential equation $$\ddot{\phi} + \alpha \dot{\phi} + \beta \sin \phi = u.$$ Given some initial condition $\phi(0), \dot{\phi}(0)$, the goal is to drive the pendulum to the specified terminal position and angular speed $[\phi(T), \dot{\phi}(T)]^T = b \in \R^2$ for the fixed time $T$. The secondary goal is to have sparse control and the least control capacity $\int_{0}^T |u(t)| dt$.

The model was discretized on the interval $[0, T]$. The discretized control $U$ has dimension $N = T/h - 1$, where $h$ is the discretization step.
The problem is to solve two equations $[\phi_d(T, U), \psi_d(T, U)]^T = b$, where $\phi_d, \psi_d$ are the discrete counterparts of $\phi, \dot{\phi}$, in $N$ dimensional variable $U$. 

The problem was solved by exploiting Algorithm~2 with specific choice of norm, namely - $\ell_1$-norm. First, it represents a discretized control capacity ($\|u\| = \sum_{i=0}^N |U^{(i)}|$). Second, it is known for its property of finding sparse solution. The initial approximation was $U=0$.
Newton method converges in 3 steps, resulting in 5 non-zero components of control (i.e. the control should be applied at 5 time instants only). Moreover, the first Newton step reveals 2 components (time instants), which are sufficient to get to the goal, see Figure~\ref{fig:exp-4-optimal-control}. Thus 2-impulse control (with impulses at $t=98$ and $t=153$) solves the problem.

Details on the simulation can be found in \cite{Polyak-Tremba-optimal-control}.

\begin{figure}
\caption{Solution with two-impulse control{\color{red}, \cite{Polyak-Tremba-optimal-control}}.}
\includegraphics[width=1.0\textwidth]{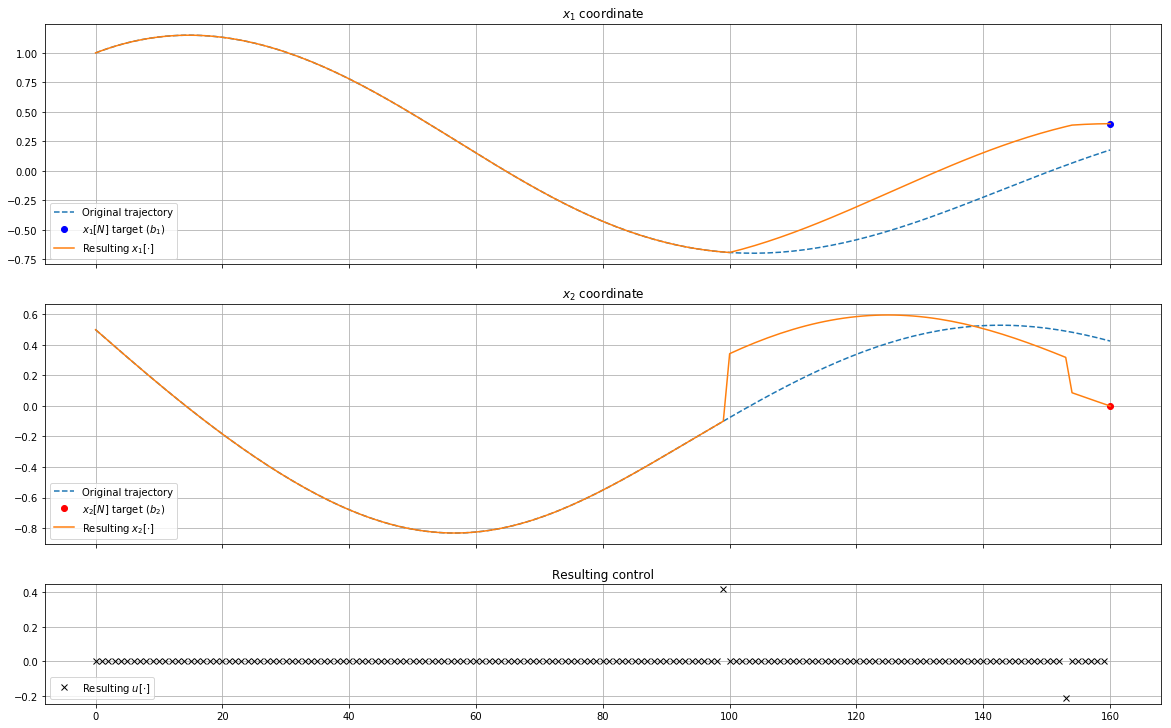}
\label{fig:exp-4-optimal-control}
\end{figure}

\section{Conclusions and future research}
\label{sec:conclusion}

New solvability conditions for under-determined equations (with wider solvability set) are proposed. The algorithms for finding a solution are easy to implement, they combine weaker assumptions on initial approximations and fast convergence rate. No convexity assumptions are required. 
The algorithms have large flexibility in using prior information, various norms and problem structure. It is worth mentioning that we do not try to convert the problem into optimization one. Combination of damped/pure Newton method is a contribution for solving classic $n = m$ problems as well.

There are numerous directions for future research.
\begin{enumerate}
\item We suppose that the auxiliary optimization problem for finding direction $z^k$ is solved exactly. Of course an approximate solution of the sub-problem suffices.
\item The algorithms provide a solution of the initial problem which is not specified a priori. Sometimes we are interested in the solution closest to $x^0$, i.e. $\min_{P(x) = 0} \|x - x^0\|$. An algorithm for this purpose is of interest.
\item More general theory of structured problems (Section 5.1) is needed.
\item It is not obvious how to introduce regularization techniques into the algorithms. 
\end{enumerate}

\section*{{\color{red}Acknowledgments}}
The authors thank Yuri Nesterov and Alexander Ioffe for helpful discussions and references; the comments of the anonymous reviewers are highly acknowledged.

\section*{Funding}
This work was supported by the Russian Science Foundation under Grant 16-11-10015.

\end{document}